\documentclass[reqno, 12pt]{amsart}
\pdfoutput=1
\makeatletter
\let\origsection=\section \def\section{\@ifstar{\origsection*}{\mysection}} 
\def\mysection{\@startsection{section}{1}\z@{.7\linespacing\@plus\linespacing}{.5\linespacing}{\normalfont\scshape\centering\S}}
\makeatother        

\usepackage{amsmath,amssymb,amsthm}
\usepackage{mathrsfs}
\usepackage{mathabx}\changenotsign
\usepackage{bbm}
 
\usepackage{xcolor}
\usepackage[backref]{hyperref}
\hypersetup{
    colorlinks,
    linkcolor={red!60!black},
    citecolor={green!60!black},
    urlcolor={blue!60!black}
}

\usepackage[open,openlevel=2,atend]{bookmark}

\usepackage[abbrev,msc-links,backrefs]{amsrefs} 
\usepackage{doi}

\renewcommand{\PrintDOI}[1]{\doi{#1}}

\usepackage[T1]{fontenc}
\usepackage{lmodern}
\usepackage[babel]{microtype}
\usepackage[english]{babel}

\linespread{1.23}
\usepackage{geometry}
\geometry{left=27.5mm,right=27.5mm, top=25mm, bottom=25mm}
\numberwithin{equation}{section}

\usepackage{enumitem}
\def\rmlabel{\upshape({\itshape \roman*\,})}

\def\alabel{\upshape({\itshape \alph*\,})}

\let\polishlcross=\l
\def\l{\ifmmode\ell\else\polishlcross\fi}

\let\emptyset=\varnothing
\let\setminus=\smallsetminus

\makeatletter
\def\moverlay{\mathpalette\mov@rlay}
\def\mov@rlay#1#2{\leavevmode\vtop{   \baselineskip\z@skip \lineskiplimit-\maxdimen
   \ialign{\hfil$\m@th#1##$\hfil\cr#2\crcr}}}
\newcommand{\charfusion}[3][\mathord]{
    #1{\ifx#1\mathop\vphantom{#2}\fi
        \mathpalette\mov@rlay{#2\cr#3}
      }
    \ifx#1\mathop\expandafter\displaylimits\fi}
\makeatother

\newcommand{\dcup}{\charfusion[\mathbin]{\cup}{\cdot}}

\DeclareFontFamily{U}  {MnSymbolC}{}
\DeclareSymbolFont{MnSyC}         {U}  {MnSymbolC}{m}{n}
\DeclareFontShape{U}{MnSymbolC}{m}{n}{
    <-6>  MnSymbolC5
   <6-7>  MnSymbolC6
   <7-8>  MnSymbolC7
   <8-9>  MnSymbolC8
   <9-10> MnSymbolC9
  <10-12> MnSymbolC10
  <12->   MnSymbolC12}{}
\DeclareMathSymbol{\powerset}{\mathord}{MnSyC}{180}

\let\epsilon=\varepsilon

\let\rho=\varrho
\let\theta=\vartheta
\let\kappa=\varkappa

\newcommand{\cE}{\mathcal{E}}
\newcommand{\cF}{\mathcal{F}}
\newcommand{\cG}{\mathcal{G}}
\newcommand{\cH}{\mathcal{H}}

\newcommand{\ad}{\mathrm{ad}}

\newcommand{\EX}{\mathfrak{E}}

\theoremstyle{plain}
\newtheorem{theorem}{Theorem}[section]
\newtheorem{fact}[theorem]{Fact}
\newtheorem{prop}[theorem]{Proposition}
\newtheorem{claim}[theorem]{Claim}

\newtheorem{lemma}[theorem]{Lemma}

\theoremstyle{definition}

\newtheorem{definition}[theorem]{Definition}

\usepackage{accents}

\let\phi=\varphi
\def\red{\text{red}}

\begin{document}
\title[Stars and cliques]
{A tale of stars and cliques}

\author[Tomasz {\L}uczak]{Tomasz {\L}uczak} 
\address{Adam Mickiewicz University,
Faculty of Mathematics and Computer Science, Pozna\'n, Poland}
\email{tomasz@amu.edu.pl}
\email{joaska@amu.edu.pl}
\thanks{The first author was partially 
supported by NCN grant 2012/06/A/ST1/00261.}

\author{Joanna Polcyn}

\author[Christian Reiher]{Christian Reiher}
\address{Fachbereich Mathematik, Universit\"at Hamburg, Hamburg, Germany}
\email{Christian.Reiher@uni-hamburg.de}

\subjclass[2010]{Primary: 05C65. Secondary: 05D05}
\keywords{hypergraphs, decomposition, intersection, extremal set theory, phase transition}

\newcommand{\irt}{\mathcal{J}(r,t)}
\newcommand{\irtt}{r,t}
\newcommand{\el}{{rt}}
\newcommand{\ha}{c_{4}}
\newcommand{\hb}{c_{7}}
\newcommand{\hc}{c_{5}}
\newcommand{\hn}{\hat n}
\newcommand{\hcc}{c_6}
\newcommand{\kr}{\operatorname{cr}_e}
\newcommand{\krr}{\operatorname{cr}_v}
\newcommand{\pp}{P}
\newcommand{\ppkl}{P^{k}_\ell}
\newcommand{\pcz}{P^{4}_2}
\newcommand{\F}{\mathcal{F}}
\newcommand{\G}{\mathcal{G}}
\newcommand{\tH}{\overline{H}}
\newcommand{\hH}{\widehat{H}}
\newcommand{\hG}{\widehat{G}}
\newcommand{\hR}{\widehat{R}}
\newcommand{\hU}{\widehat{U}}
\newcommand{\bl}{\bar{\ell}}
\newcommand{\bG}{\bar{G}}
\newcommand{\sg}{\operatorname{sg}}
\newcommand{\ff}{f}
\let\vn=\varnothing
\let\sm=\setminus

\hyphenation{e-li-mi-nate essen-tia-lly corres-pon-ding}

\begin{abstract}
We show that for infinitely many natural numbers $k$ there are $k$-uniform hypergraphs 
which admit a `rescaling phenomenon' as described in~\cite{LP}. More precisely, 
let~$\mathcal{A}(k,I, n)$ denote the class of $k$-graphs on $n$ vertices 
in which the sizes of all pairwise intersections of edges belong to a set~$I$. 
We show that if $k=rt^2$ for some $r\ge 1$ and~$t\ge 2$, and~$I$ is chosen 
in some special way, the densest graphs in $\mathcal{A}(rt^2,I, n)$ are either 
dominated by stars of large degree, or basically, they are `$t$-thick' $rt^2$-graphs 
in which vertices are partitioned into groups of $t$ vertices each and every edge 
is a union of $tr$ such groups. 
It is easy to see that, unlike in stars, 
the maximum degree of $t$-thick graphs is 
of a lower order than the  number of its edges. Thus,  if we study the 
graphs from $\mathcal{A}(rt^2,I, n)$ with a prescribed number of edges $m$ which minimise the 
maximum degree, around the value of $m$ which is the number of edges of the largest 
$t$-thick graph,
a rapid, discontinuous phase transition can be observed. Interestingly, these two types 
of $k$-graphs determine the structure of all hypergraphs in $\mathcal{A}(rt^2,I, n)$. 
Namely, we show that each such hypergraph can be decomposed 
into a $t$-thick graph $H_T$, a special collection $H_S$ of stars, 
and a sparse `left-over' graph $H_R$. 
\end{abstract}

\maketitle

\section{Introduction}
By a {\it set system} we mean a pair ${S=(V, \cE)}$ 
such that $\cE$ is a collection of subsets of $V$. The members of $V$ are usually 
referred to as the {\it vertices} of the set system, whilst the members of $\cE$
are called {\it edges}. If all members of $\cE$ are of the same cardinality $k\ge 0$ 
we call $S$ a {\it $k$-uniform hypergraph} or, more brief\-ly, a {\it $k$-graph}.

Occasionally we identify a hypergraph $H$ with its set of edges, denoting, for example, 
by~$|H|$ the number of edges in $H$. For a given set $I$ of nonnegative integers, we say that 
a $k$-graph~$H$ is \emph{$I$-intersecting} if $|e\cap f|\in I$ holds for all
$e, f\in H$. Starting with the seminal work~\cite{EKR61} of Erd\H{o}s, Ko, and Rado,
the study of $I$-intersecting hypergraphs and set systems has a long tradition in extremal 
combinatorics
(see, e.g., \cites{AK97, DEF76, BF80, W84, FF85, FR87, MR14} for some milestones). 
Let us remark that 
sometimes in the literature (e.g., \cites{DEF76, BF80, FF85}) an $I$-intersecting 
$k$-graph on $n$ vertices  is called an $(n,k, I)$-system.

Motivated by the stability of extremal hypergraphs for the $3$-uniform loose path of 
length~$3$ the first two authors studied $\{0, 2,3,4\}$-intersecting $4$-graphs in~\cite{LP}. 
The aim of the present article is to extend their results to the more general family $\irt$
which consists of all $I$-intersecting $rt^2$-graphs, where 
$r\ge 1$ and $t\ge 2$ are arbitrary integers and 
\[
	I=\bigl\{s\colon t\mid s\textrm{\  or\ }s\ge rt(t-1)\bigr\}\,.
\]

This choice of the set of permissible intersections may look bizarre at first
and our main incentive to study it came from the aesthetical merits of the results
we hoped to obtain: to explain those, we start from the observation that there are 
two quite different examples of dense  $rt^2$-graphs $H\in \irt$ on $n$ vertices  
with~$\Theta(n^{rt})$ edges.

The most obvious one  is the {\it full $\bigl(rt(t-1)\bigr)$-star}, i.e., a hypergraph $H$ 
with a distinguished 
$rt(t-1)$-set~$S$ of vertices, called the {\it centre} of the star, such that the edges of $H$
are precisely the $rt^2$-supersets of~$S$. Clearly such a star has exactly 
$\binom{n-rt^2+rt}{rt}$ edges and it can be shown that, for large $n$, it is the unique 
hypergraph which maximises the number of edges among all hypergraphs in $\irt$ 
on $n$ vertices  (see Proposition~\ref{prop:silly} below).

However, there exists  another natural construction of dense  $rt^2$-graphs $H\in \irt$
with~$n$ vertices and $\Theta(n^{rt})$ edges. 
It proceeds by splitting the vertex set into 
$\lfloor n/t\rfloor$ subsets of size $t$ called {\it teams} (and a small 
number of left-over vertices) and to declare an $rt^2$-set to be an edge if and 
only if it is a union of~$rt$ teams. 
We call the resulting hypergraph a {\it thick clique} and to its subhypergraphs we refer as 
{\it thick hypergraphs}. 
Note that each thick hypergraph  has the property 
that for any two edges $e$ and $f$ the number $|e\cap f|$ is a multiple of $t$ and, 
hence, it indeed belongs to $\irt$.

The point that interests us here is that even though both the star and the thick clique 
have $\Theta(n^{rt})$ edges, their maximum vertex degrees are of different orders of magnitude.
In fact, while the vertices belonging to the centre of a star have degree $\Omega(n^{rt})$,
the maximum degree of a thick clique is easily seen to be only $O(n^{rt-1})$. Perhaps
surprisingly, it turns out that this phenomenon arises in a very ``discontinuous''
manner: As soon as a graph from $\irt$ has one edge more than the thick
clique, it needs to contain a vertex of degree~$\Omega(n^{rt})$. 

This is the main result of the present work which, crudely, can be stated as follows 
(for further structural results see 
Theorems~\ref{thm:dt} and~\ref{prop:25} below).

\begin{theorem}\label{thm:mainr2}
For  $r\ge 1$ and $t\ge 2$ there exists an $n_0$ such that for every $rt^2$-graph 
	$H\in \irt$ with $n\ge n_0$ vertices and at least $\binom{\lfloor n/t\rfloor}{rt}+1$ edges 
	we have $\Delta(H)\ge e(H)/(3t)$.
	
	On the other hand, for every $n\ge rt^2$ a thick clique  $H_0\in \irt$ on $n$ vertices
has~$\binom{\lfloor n/t\rfloor}{rt}$ edges
and
	$\Delta(H_0)=\binom{\lfloor n/t\rfloor -1}{rt-1}$.
\end{theorem}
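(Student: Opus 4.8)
\emph{The second assertion} is a routine computation and I would settle it first. Fix a partition of an arbitrary $t\lfloor n/t\rfloor$-element subset of the vertex set into teams $T_1,\dots,T_{\lfloor n/t\rfloor}$ of size~$t$, leaving the remaining $n-t\lfloor n/t\rfloor<t$ vertices aside, and let $H_0$ be the thick clique whose edges are the unions of $rt$ of these teams; this makes sense because $\lfloor n/t\rfloor\ge rt$ whenever $n\ge rt^2$. Each edge has $rt\cdot t=rt^2$ vertices, and any two edges meet in a union of teams, hence in a multiple of $t$ vertices, so $H_0\in\irt$; trivially $|H_0|=\binom{\lfloor n/t\rfloor}{rt}$. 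Since the teams are disjoint, a vertex of $T_i$ lies in exactly those edges obtained by adjoining $rt-1$ further teams to $T_i$, so its degree is $\binom{\lfloor n/t\rfloor-1}{rt-1}$, while the set-aside vertices have degree~$0$; thus $\Delta(H_0)=\binom{\lfloor n/t\rfloor-1}{rt-1}$.

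\emph{For the first assertion} I would argue by contraposition: assuming $H\in\irt$ on $n\ge n_0$ vertices has $\Delta(H)<e(H)/(3t)$, I want $e(H)\le\binom{\lfloor n/t\rfloor}{rt}$. Everything hinges on the arithmetic built into $I$: a size is forbidden exactly when it is $<rt(t-1)$ and not divisible by~$t$, so for any two edges $e,f\in H$ one has either $t\mid|e\cap f|$ (call the pair \emph{aligned}) or $|e\cap f|\ge rt(t-1)=rt^2-rt$, equivalently $|e\setminus f|\le rt$ (call it \emph{clustered}, the star-like case). The plan is to show that the low-degree hypothesis forces $H$, up to a bounded left-over, to be a subgraph of a thick clique, and then to read the bound off a trivial team count. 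First, from the aligned interactions one extracts a single global partition of (most of) $V(H)$ into $t$-element teams so that all but few edges are unions of $rt$ teams; this is the substantive point and coincides with the stability and decomposition statements to be proved below (Theorems~\ref{thm:dt} and~\ref{prop:25}). The heuristic is that a vertex of not-too-small degree must ``carry a team'': among the edges through it a $\Delta$-system whose core has a size divisible by~$t$ exhibits $t-1$ constant companions, and an overlap analysis forces these candidate teams to be mutually consistent. Granting the team partition, the edges that are unions of $rt$ teams number at most $\binom{\#\{\text{teams}\}}{rt}\le\binom{\lfloor n/t\rfloor}{rt}$ because distinct teams are disjoint $t$-sets. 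Second, the edges that are not unions of teams form the left-over and star part of the decomposition; the structural theorems pin them down as concentrating on a bounded number of high-multiplicity ``centre'' vertices, whose degrees are capped by $\Delta(H)<e(H)/(3t)$, so their number is small — small enough, together with the thick count and with the slack built into the factor $3t$, to keep $e(H)\le\binom{\lfloor n/t\rfloor}{rt}$.

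\emph{The main obstacle} is this first substantive step: manufacturing one coherent team partition out of purely local aligned structure while simultaneously keeping the clustered, star-like edges under control. Bare sunflower extraction only produces cores of bounded rather than linear size, so the refined $\Delta$-system method and the stability arguments that occupy the body of the paper are genuinely needed; with Theorems~\ref{thm:dt} and~\ref{prop:25} in hand, the deduction of Theorem~\ref{thm:mainr2} is the short bookkeeping sketched above. Two sanity checks guide the constants: the full thick clique sits exactly at the threshold, with $\Delta/e=rt/\lfloor n/t\rfloor\to0$, which in particular shows it is maximal in $\irt$ for large~$n$; and at the opposite extreme, where the hypothesis fails, the full $\bigl(rt(t-1)\bigr)$-star is the genuine maximiser (Proposition~\ref{prop:silly}).
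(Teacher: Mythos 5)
Your computation for the thick clique is correct and is all the paper needs for the second assertion, and your overall plan for the first assertion --- deduce it from the Structure Theorem~\ref{thm:dt} --- is exactly the paper's route (the paper proves the stronger Theorem~\ref{thm:mainr3} from Theorem~\ref{thm:dt} and reads off Theorem~\ref{thm:mainr2}). The problem is that the deduction itself, which is the only content this statement requires beyond Theorem~\ref{thm:dt}, is missing from your write-up, and the bookkeeping you sketch in its place would not close. First, the decomposition of Theorem~\ref{thm:dt} does \emph{not} concentrate the non-thick edges on a bounded number of centres: the stars of $H_S$ are only semi-disjoint with bodies inside $V_S$, and there can be as many as $|V_S|/(rt)$ of them. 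The bound $\ell\le 7t^2$ on the number of stars appears only in Theorem~\ref{prop:25}, which concerns the extremal hypergraphs in $\EX(\irtt;n,m)$ and whose proof rests (via Fact~\ref{fact:core}) on Theorem~\ref{thm:mainr3}, i.e.\ on the statement you are proving --- so citing it here is circular. Second, even granting boundedly many stars, capping each by $\Delta(H)<e(H)/(3t)$ only yields $|H_S|\le 7t^2\cdot e(H)/(3t)>e(H)$, which proves nothing.

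What is really needed is the quantitative trade-off that your phrase ``slack built into the factor $3t$'' gestures at but does not supply: when $V_S\ne\varnothing$ the thick part is limited to $\binom{|V_T|/t}{rt}$ edges with $|V_T|\le n-|V_S|-|V_R|$, and one must show that this deficit cannot be recouped by $H_S\cup H_R$ unless a \emph{single} star of $H_S$ already carries more than $e(H)/(3t)$ edges. The paper does this in Claim~\ref{tp4}: it lower-bounds the average degree of the trace system $G_S=\{h\cap V_S\colon h\in H_S\}$ by $rt\,m/n-rt\,n^{rt-2}$, using the near-extremality of $m$ and the monotonicity of $x\mapsto\binom{x/t}{rt}/x$, and then notes that a component of $G_S$ (that is, one star) of at least average degree has at least $\hn$ vertices and at least $m\hn/n-n^{rt-1}>m/(3t)$ edges; combined with Claim~\ref{cl42} (if $V_S=\varnothing$ then $m\le\binom{\lfloor n/t\rfloor}{rt}$), this gives the theorem. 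Some argument of this kind --- an extremal balancing over the possible sizes of $V_S$, not a count of centres --- is indispensable, and it is the step your proposal leaves as an assertion.
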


The main step in the proof of Theorem~\ref{thm:mainr2} is a somewhat surprising 
structural result (see Theorem~\ref{thm:dt} below). It turns out that stars and 
thick hypergraphs which naturally emerge when we study hypergraphs in $\irt$ 
whose density is close to the maximum density $\Theta(n^{rt})$, are natural building 
blocks for all `not too sparse' members $\irt$. More specifically,
we show that up to an `error of lower order,' i.e., up to at most $O(n^{rt-1})$ 
edges, any such hypergraph arises by attaching ``non-overlapping'' stars to a thick hypergraph.

This article is organised as follows. 
In the next section we state a precise version of the structure theorem mentioned above. 
Then, in Section~\ref{sec:new}, we show that it does indeed imply
Theorem~\ref{thm:mainr2} and give more structural characterisations of 
dense hypergraphs in $\irt$ with small maximum degree.
Section~\ref{sec:tools} collects some tools
needed for the proof of this structure theorem including
a `decomposition lemma' (see Lemma~\ref{lem:dl} below) that might 
have some other applications as well. 
Finally, in Section~\ref{sec:4}, we prove the structure theorem. 

\section{The structure theorem}\label{sec:mmd}

We begin this section with some definitions allowing us to formulate a 
precise version of the structure theorem for  $rt^2$-graphs $H\in \irt$. 

Let $k\ge s\ge 0$ be integers. A $k$-uniform hypergraph $H=(V, E)$ 
with a set $S\subseteq V$ of distinguished vertices of size $|S|=s$ is an~\emph{$s$-star} 
if $S\subseteq e$ holds for all edges $e\in E$. We call~$S$ the \emph{centre} of the star and 
$\bigcup_{h\in E}(h\setminus S)$ is referred to as the \emph{body} of the star. 
A collection of stars is said to be \emph{semi-disjoint} if their centres are distinct 
and their bodies are mutually disjoint.
Of course, an $s$-star $H$ on $|V|=n$ vertices can have at most $\binom{n-s}{k-s}$ edges. 
If this happens we say that $H$ is a \emph{full $s$-star} and denote it by~$S^k_{n,s}$. 

Next, for a given hypergraph $H=(V,E)$, we say that a subset $W\subseteq V$ of its vertex set 
is \emph{inseparable} in $H$, if for all edges $h\in E$ we have $W\cap h\in \{\emptyset,W\}$.
Now consider three natural numbers $k$, $t$, and $n$ satisfying $t\mid k$,
and suppose that a set $V$ of $n$ vertices is partitioned into $\lfloor n/t\rfloor$
many $t$-subsets called {\it teams} and fewer than $t$ further vertices. 
By~$\widetilde{K}^{k}_{n,t}$ we denote the {\em thick $(k, n, t)$-clique}, 
i.e., the $k$-graph on $n$ vertices whose $\binom{\lfloor n/t\rfloor}{k/t}$ 
edges are all possible unions of some $k/t$ of these teams. We refer to its subhypergraphs as 
{\it $t$-thick} or just {\it thick hypergraphs}. 
Evidently the teams
are inseparable in $\widetilde{K}^{k}_{n,t}$ and a $k$-graph $H$ on $n$ vertices 
possessing~$\lfloor  n/t\rfloor$ 
mutually disjoint inseparable $t$-sets of vertices is a subhypergraph of the thick 
clique~$\widetilde K^{k}_{n,t}$.  
 
Finally, for positive integers $t$, $\ell$, and $a$, we define  a class
$\F(t,\ell,a)$ of $\ell t$-graphs as follows.

\begin{definition}\label{dfn:family}
	For given natural numbers $t,\ell$, and $a$, we say that an $\ell t$-graph $H=(V,E)$ 
	belongs to the class $\F(t,\ell,a)$ if  
	there exist partitions 
		\[
		V=V_T\cup V_S\cup V_R \quad \text{and} \quad H=H_T\cup H_S\cup H_R\,,
	\]
		such that 
	
	\begin{enumerate}[label=\rmlabel]
		\item\label{it:VR} $V_T$ is a union of  inseparable $t$-subsets of $V$, 
			and $H_T=H[V_T]$; 
		\item\label{it:VS} $H_S= \{h\in H\colon |h\cap V_S|=\ell\}$ 
			consists of semi-disjoint 
			$\bigl(\ell(t-1)\bigr)$-stars with their centres in $V_T\cup V_R$ 
				and their bodies in $V_S$;		
		\item\label{it:VSS} any edge of $H$ that intersects the body of a star 
			$S^*\subseteq H_S$ contains the centre of $S^*$; 
		\item\label{it:VT} 	$|H_R|\le |V_T||V_S|n^{\ell-3}+|V_R|an^{\ell-2}$.
	\end{enumerate}
\end{definition}

Now the structure theorem for  $rt^2$-graphs $H\in\irt$ promised in the introduction 
can be stated as follows.

\begin{theorem}[Structure Theorem]\label{thm:dt}
	For all integers $r\ge 1$ and $t\ge 2$ we have 
	\[
		\irt\subseteq \F(t,rt,(rt^2)^{r^3t^6})\,.
	\]
\end{theorem}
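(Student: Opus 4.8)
The plan is to analyze an arbitrary $I$-intersecting $rt^2$-graph $H$ on $n$ vertices and gradually peel off the three pieces $H_T$, $H_S$, $H_R$ demanded by Definition~\ref{dfn:family}, where throughout $\ell=rt$ so that edges have size $\ell t=rt^2$. The guiding principle is a dichotomy governed by intersection sizes: because $I=\{s\colon t\mid s \text{ or } s\ge rt(t-1)\}$, two edges that meet in a ``small'' set must meet in a multiple of $t$, while an edge meeting another in a set of size $\ge rt(t-1)=\ell t-\ell$ leaves only $\ell$ vertices outside the intersection. The latter phenomenon is what creates stars: if many edges pairwise agree on all but $\ell$ vertices, the common part they share is forced to be a fixed $(\ell(t-1))$-set, the centre of a star.

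\textbf{Step 1: Locating the stars and the set $V_S$.} First I would define $V_S$ to be the set of vertices that lie in the body of some ``large'' star — concretely, vertices $v$ for which there is an $(\ell(t-1))$-set $C$ with $v\notin C$ and with many (a polynomial in $n$ of degree $\ell-1$) edges of $H$ containing $C$. A counting/sunflower argument, using the decomposition lemma (Lemma~\ref{lem:dl}) advertised in Section~\ref{sec:tools}, should show that the edges meeting $V_S$ in exactly $\ell$ vertices organise themselves into a semi-disjoint family of $(\ell(t-1))$-stars: if two such stars had intersecting bodies or overlapping behaviour, the intersection condition $I$ together with the size constraint $\ell t$ would force their centres to coincide or produce a forbidden intersection size. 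This gives $H_S$ and establishes~\ref{it:VS}; item~\ref{it:VSS} — that every edge meeting a star body contains that star's centre — is then forced because an edge meeting the body in one vertex but missing the centre would intersect the star's edges in sizes strictly between a multiple of $t$ and $\ell t-\ell$, hence outside $I$ unless it contains the centre.

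\textbf{Step 2: The thick part $V_T$ and inseparable teams.} On the remainder $V'=V\setminus V_S$, consider the edges $H'$ living essentially on $V'$. Here all relevant pairwise intersections are multiples of $t$ (the ``$s\ge rt(t-1)$'' branch of $I$ being unavailable once the star vertices are removed, except for small error). The goal is to find a large collection of mutually disjoint $t$-sets that are inseparable in $H'$; their union is $V_T$ and we set $H_T=H[V_T]$, satisfying~\ref{it:VR}. To produce the teams I would iterate: pick an edge $h$, and using that every other edge meets $h$ in a multiple of $t$, run a quotient/refinement argument on $h$'s vertices — the ``trace'' partition of $h$ induced by all other edges has blocks whose sizes must be compatible with all intersections being multiples of $t$, and an extremal/pigeonhole step forces most blocks to have size exactly $t$ (here the precise shape of $I$ and the uniformity $rt^2$ pin down the arithmetic). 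Vertices that fail to fall into a clean $t$-block get absorbed into $V_R$.

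\textbf{Step 3: Bounding the left-over $H_R$.} Finally, $V_R:=V\setminus(V_T\cup V_S)$ and $H_R:=H\setminus(H_T\cup H_S)$, and one must verify the quantitative bound~\ref{it:VT}, $|H_R|\le |V_T||V_S|n^{\ell-3}+|V_R|an^{\ell-2}$ with $a=(rt^2)^{r^3t^6}$. Edges in $H_R$ either touch $V_S$ without being counted in $H_S$ (so they meet some star body in $\ge 2$ vertices, or meet $V_S$ in $\ne\ell$ vertices) — these are controlled by the $|V_T||V_S|n^{\ell-3}$ term via the constraint~\ref{it:VSS} forcing shared centres — or they touch $V_R$, in which case each vertex of $V_R$ lies in at most $O(n^{\ell-2})$ such edges because $V_R$-vertices are precisely those \emph{not} forming inseparable teams, which caps their degree below the thick-clique threshold by the intersection arithmetic; the constant $a$ is where the wasteful but explicit bookkeeping across the $\le r^3t^6$-many iterations of Step~2 accumulates.

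\textbf{Main obstacle.} The hard part will be Step~2: showing that on the star-free remainder the ``all intersections are multiples of $t$'' condition really does force a genuine partition into inseparable teams up to only $O(n^{\ell-1})$ exceptional edges. Unlike the pure EKR-type rigidity, here the weak condition ``$|e\cap f|\equiv 0\pmod t$'' alone does \emph{not} force a team structure (e.g. two disjoint teams could be ``split'' differently by different edges), so one needs a subtle iterative argument — repeatedly refining the team candidates, each time either confirming inseparability or extracting a bounded amount of structure into $V_R$ — and controlling that this process terminates after a number of rounds depending only on $r$ and $t$, which is exactly what produces the tower-type constant $(rt^2)^{r^3t^6}$ and what must be checked carefully against the error budget in~\ref{it:VT}.
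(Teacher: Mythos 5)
Your overall architecture (extract stars into $V_S$, teams into $V_T$, push the rest into an error term) is of course dictated by Definition~\ref{dfn:family}, and your Step~1 idea for condition~\ref{it:VSS} -- using edges of a sufficiently rich star to force two consecutive intersection sizes into $I$ -- is genuinely close to what the paper does (it works with kernels of sunflowers of at least $rt^2$ edges, the ``red'' sets, rather than with raw edge counts over a centre, which is what makes the consecutive-integer trick actually available for \emph{every} body vertex). But there is a real gap exactly where you flag it, and it is not a technicality you can defer: your Step~2 rests on the assertion that once the star vertices are removed, ``all relevant pairwise intersections are multiples of $t$ \ldots{} except for small error.'' This is unjustified. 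Two edges avoiding $V_S$ may still intersect in $rt(t-1)$ or more vertices without any large star being present (a bounded number of edges through a common $\bigl(rt(t-1)\bigr)$-set creates no star under your definition, yet such edges can constitute almost all of $H$), so the ``quotient/refinement'' argument you sketch has no divisibility hypothesis to run on, and the pigeonhole step that is supposed to force blocks of size exactly $t$ is never substantiated. The paper avoids this entirely: it never tries to refine the edges themselves, but instead applies the decomposition lemma (Lemma~\ref{lem:dl}) to the low-uniformity red sets (which \emph{are} $t$-divisible against everything, by the sunflower extension argument), obtaining an antichain of ``green'' sets of bounded maximum degree whose $t$-element members are declared the teams. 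There is no iteration at all, and the constant $(rt^2)^{r^3t^6}$ comes from degree bounds on green components, not from a bounded number of refinement rounds.

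The second gap is Step~3. Your bound on the $V_R$-term is circular: saying that vertices of $V_R$ have degree $O(n^{\el-2})$ in $H_R$ ``because they are precisely those not forming inseparable teams'' restates the conclusion rather than proving it. In the paper this is where most of the work lies: $H_R$ is split into edges containing both a star centre and a disjoint team (giving the $|V_T||V_S|n^{\el-3}$ term by a direct count) and a remainder $\widehat H_R$, which is handled by a marking argument over $V_R$ with several cases (edges not coverable by red and green sets, edges covered by green sets, edges containing a red $\bigl(rt(t-1)+1\bigr)$-set), using the sunflower lemma (Theorem~\ref{thm:sunflower}), the bounded degree of the green system, and -- crucially -- the linear-algebra bound $|H^*_{\red}|\le n$ from Lemma~\ref{lem:linalgt}, an ingredient your sketch never invokes. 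Without a concrete mechanism replacing these two pieces (a source of $t$-divisible building blocks for the teams, and a per-vertex counting scheme over $V_R$), the proposal does not yet constitute a proof.
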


The proof of this result is deferred to Section~\ref{sec:4}. We conclude this 
section by pointing out that the structure theorem quickly allows us to determine the 
extremal $rt^2$-graphs in~$\irt$. The following 
statement shows that the extremal hypergraph, the full $\bigl(\ell(t-1)\bigr)$-star, 
is unique and stable for this problem. 

\begin{prop}\label{prop:silly}
	Given natural numbers $\ell\ge t\ge 2$, $a$, and $c$, there exists an integer~$n_*$ such that 
	every $\ell t$-graph $H=(V, E)\in \F(t,\ell,a)$ with $|V|=n\ge n_*$ and 
	${e(H)\ge \binom{n-c}{\ell}-\frac{n^{\ell-1}}{2(\ell-1)!}}$ 
	edges is obtained from an $\bigl(\ell(t-1)\bigr)$-star by adding at most 
	$\binom{c}{\ell t}$ further edges. 
	
	Moreover, if $e(H)\ge \binom{n-\ell t}{\ell}-\frac{n^{\ell-1}}{2(\ell-1)!}$, then $H$ 
	is an $\bigl(\ell(t-1)\bigr)$-star. 
	
	In particular, each $H\in \F(t,\ell,a)$ has at most $\binom{n-\ell(t-1)}{\ell}$ 
	edges, and this maximum is achieved only if $H$ is isomorphic to the full 
	$\bigl(\ell(t-1)\bigr)$-star $S^{\ell t}_{n, \ell(t-1)}$.
\end{prop}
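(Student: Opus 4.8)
The plan is to feed the decomposition supplied by Definition~\ref{dfn:family} into the hypothesis that $e(H)$ is within $O(n^{\ell-1})$ of $\binom{n}{\ell}$. Write $V=V_T\cup V_S\cup V_R$ and $H=H_T\cup H_S\cup H_R$ as there. Since $V_T$ is a union of $|V_T|/t$ inseparable $t$-sets, $H_T$ is a subhypergraph of the thick clique on $V_T$, so $|H_T|\le\binom{|V_T|/t}{\ell}$; if $B_1,B_2,\dots$ denote the pairwise disjoint bodies of the stars making up $H_S$, then $|H_S|\le\sum_i\binom{|B_i|}{\ell}$; and the crude bounds $|V_T|,|V_S|,|V_R|\le n$ turn property~\ref{it:VT} into $|H_R|\le(1+a)n^{\ell-1}$. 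As $\binom{n-c}{\ell}-\tfrac{n^{\ell-1}}{2(\ell-1)!}=\binom{n}{\ell}-O(n^{\ell-1})$, it follows that
\[
	\binom{n}{\ell}-O(n^{\ell-1})\ \le\ |H_T|+|H_S|\ \le\ \binom{|V_T|/t}{\ell}+\sum_i\binom{|B_i|}{\ell}\,.
\]

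First I would argue that one body is almost all of $V$. If $b^*$ is the largest of the numbers $|V_T|/t,|B_1|,|B_2|,\dots$, these sum to at most $n$, so superadditivity of $x\mapsto\binom{x}{\ell}$ bounds the right-hand side above by $\binom{b^*}{\ell}+\binom{n-b^*}{\ell}$, and a short computation with binomial coefficients (distinguishing $b^*\ge n/2$, which yields $n-b^*=O(1)$, from $b^*<n/2$, which is incompatible with the lower bound) shows $b^*\ge n-C$ for some $C=C(t,\ell,a,c)$. Since $b^*=|V_T|/t$ would give $|V_T|\ge t(n-C)>n$, the value $b^*$ is the size of the body $B:=B_i$ of some star $S^*\subseteq H_S$; let $Z\subseteq V_T\cup V_R$ be its centre, so $|Z|=\ell(t-1)$ and $Z\cap B=\emptyset$. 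As $B\subseteq V_S$ we get $|V_T|+|V_R|=n-|V_S|\le n-|B|\le C$, and feeding this back into property~\ref{it:VT} sharpens the left-over bound to $|H_R|\le C(1+a)\,n^{\ell-2}$, while $|H_T|\le\binom{C}{\ell t}$.

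Next I would split the edges of $H$ according to whether they contain $Z$. By property~\ref{it:VSS} every edge meeting $B$ contains $Z$, hence every edge avoiding $Z$ lies inside $V\setminus B$, a set of size $u:=n-|B|\le C$; so there are at most $\binom{u}{\ell t}$ of them. An edge of $H_S$ containing $Z$ has $h\cap(V_T\cup V_R)=Z$ and is therefore — the stars of $H_S$ being semi-disjoint with centres in $V_T\cup V_R$ — an edge of $S^*$, of which there are at most $\binom{|B|}{\ell}=\binom{n-u}{\ell}$; the other edges through $Z$ lie in $H_T\cup H_R$ and number $O(n^{\ell-2})$. Hence $e(H)\le\binom{n-u}{\ell}+\binom{u}{\ell t}+O(n^{\ell-2})$, and since $\binom{n-c}{\ell}-\binom{n-u}{\ell}\ge(u-c)\binom{n-u}{\ell-1}$ with $u\le C$, comparing with $e(H)\ge\binom{n-c}{\ell}-\tfrac{n^{\ell-1}}{2(\ell-1)!}$ rules out $u\ge c+1$ once $n$ is large; thus $u\le c$. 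Then $\{h\in H\colon Z\subseteq h\}$ is an $\bigl(\ell(t-1)\bigr)$-star from which $H$ is obtained by adjoining the at most $\binom{u}{\ell t}\le\binom{c}{\ell t}$ edges that avoid $Z$, which is the first assertion. For the ``moreover'', apply this with $c=\ell t$: then at most $\binom{\ell t}{\ell t}=1$ edge is added, and any such edge $h_0$ would satisfy $h_0\subseteq V\setminus B$ with $|V\setminus B|\le\ell t=|h_0|$, forcing $h_0=V\setminus B\supseteq Z$ — contradicting $Z\not\subseteq h_0$; so $H$ is an $\bigl(\ell(t-1)\bigr)$-star. The ``in particular'' then follows because for large $n$ one has $\binom{n-\ell(t-1)}{\ell}>\binom{n-\ell t}{\ell}-\tfrac{n^{\ell-1}}{2(\ell-1)!}$, so any $H\in\F(t,\ell,a)$ with $e(H)\ge\binom{n-\ell(t-1)}{\ell}$ is an $\bigl(\ell(t-1)\bigr)$-star, hence has at most $\binom{n-\ell(t-1)}{\ell}$ edges, with equality only for $S^{\ell t}_{n,\ell(t-1)}$.

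The main obstacle I anticipate is getting the \emph{exact} bound $\binom{c}{\ell t}$ on the number of extra edges rather than a cruder constant. The rough picture — one near-complete star plus lower-order debris — drops out of Definition~\ref{dfn:family} almost for free, but to reach $u\le c$ (rather than merely $u\le C$) one must first pin $|V_T|+|V_R|$ down to a constant and then invoke property~\ref{it:VT} a \emph{second} time, so as to improve the bound on $|H_R|$ from the generic $O(n^{\ell-1})$ to $O(n^{\ell-2})$; only with that extra saving does the binomial-coefficient comparison in the third paragraph close. The various estimates on binomial coefficients used along the way are routine but should be carried out with some care.
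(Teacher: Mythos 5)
Your proposal is correct and follows essentially the same route as the paper: isolate one star of $H_S$ whose body $B$ covers all but boundedly many vertices, use property~\ref{it:VSS} of Definition~\ref{dfn:family} to see that every edge meeting $B$ contains its centre $Z$, re-invoke property~\ref{it:VT} with $|V_T|+|V_R|=O(1)$ to push $|H_R|$ down to $O(n^{\ell-2})$, and count the edges avoiding $Z$ inside the $\le c$-element complement; the ``moreover'' and ``in particular'' parts are handled exactly as in the paper (the single candidate extra edge would have to contain the centre). One step deserves a small repair: in the branch $b^*<n/2$, the two-term bound $\binom{b^*}{\ell}+\binom{n-b^*}{\ell}$ coming from superadditivity is symmetric in $b^*\leftrightarrow n-b^*$ and therefore is \emph{not} by itself incompatible with the lower bound when $b^*$ is small (for tiny $b^*$ it is close to $\binom{n}{\ell}$). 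What kills this case is that \emph{every} part is at most $b^*$, e.g.\ $\binom{|V_T|/t}{\ell}+\sum_i\binom{|B_i|}{\ell}\le\frac{n}{\ell}\binom{b^*}{\ell-1}\le\bigl(2^{1-\ell}+o(1)\bigr)\binom{n}{\ell}$, which for $\ell\ge2$ falls far short of the assumed number of edges; this is in effect how the paper treats the corresponding step, bounding $|H_S|$ by $2\binom{n/2}{\ell}$ when the largest star has fewer than $n/2$ vertices. With that one-line fix your argument is complete.
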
 

\begin{proof} Let us choose $n_*$ and $D$ with 
	$n_*\gg D\gg \max(t, \ell, a)+c$ such that all inequalities below  hold for $n\ge n_*$.
	Moreover, let $H=(V, E)$ with 
		\[
		|V|=n\ge n_* \quad \text{and} \quad 
		|E|=m\ge \binom{n-c}{\ell}-\frac{n^{\ell-1}}{2(\ell-1)!}
	\]
		be a $\ell t$-graph from $\F(t,\ell,a)$, and take partitions 
		\[
		V=V_T\cup V_S\cup V_R
		\quad \text{ as well as } \quad 
		H=H_T\cup H_S\cup H_R
	\]
		exemplifying this.

	Clearly 
		\begin{equation*}
	 	m\le\binom{|V_T|/t}{\ell}+\binom{|V_S|}{\ell}+|H_R|\,,
	\end{equation*}
	where
		\[
		|H_R|\le |V_T||V_S|n^{\ell-3}+|V_R|an^{\ell-2}\le 
		(|V_T|+|V_R|)an^{\ell-2}\le an^{\ell-1}\,, 
	\]
		and so
		\begin{equation}\label{eq:m2}
		m\le\binom{|V_S|+|V_T|/t}{\ell}+an^{\ell-1}\,.
	\end{equation}
		Assume first that $|V_S|< n-2D$. Then 
		\[
		|V_S|+\frac 1t|V_T|\le |V_S|+\frac 12(n-|V_S|)=\frac 12(n+|V_S|)\le n-D
	\]
		and so, by \eqref{eq:m2},
		\[
		m\le\binom{n-D}{\ell}+an^{\ell-1}
		<\binom{n-c}{\ell}-\frac{n^{\ell-1}}{2(\ell-1)!}\,.
	\]
		As this contradicts our assumption, we may conclude that $|V_S|\ge n-2D$. 
	
	In particular, we have $|V_T|+|V_R|\le 2D$, and $|H_T\cup H_R|\le (2Da+1)n^{\ell-2}$.
	Consider the largest star $S^*=(V^*,E^*)$ in $H_S$ and let $s$ be its centre. 
	Then $|V^*|\ge n/2$, since otherwise
	\[
		m=|H_S|+|H_T|+|H_R|\le \binom{n/2}{\ell} + \binom{n/2}{\ell} + (2Da+1)n^{\ell-2}
		<\binom{n-c}{\ell}-\frac{n^{\ell-1}}{2(\ell-1)!}\,.
	\]
	Now assuming  $|V^*\cap V_S|\le n-c-1$ we could argue that
	\[
		m\le \binom{n-c-1}{\ell}+\binom{c+1}{\ell} + (2Da+1)n^{\ell-2}
		<\binom{n-c}{\ell}-\frac{n^{\ell-1}}{2(\ell-1)!}\,,
	\]
	which, again, contradicts our assumption on $m$. 

	This proves that $|V^* \cap V_S|\ge n-c$. By Definition~\ref{dfn:family}\ref{it:VSS}, 
	all edges of $H$ intersecting $V^* \cap V_S$ contain $s$ and therefore they form an 
	$\bigl(\ell(t-1)\bigr)$-star. Since $|V\setminus (V^*\cap V_S)|\le c$, there can 
	be at most $\binom{c}{\ell t}$ edges not belonging to this star, which establishes
	our first assertion. The moreover-part follows from the observation that in 
	case $c=\ell t$ the only potential further edge, $V\setminus (V^*\cap V_S)$,
	would still contain $s$ and could thus be adjoined to the star.
\end{proof}

\section{Minimum maximum degree} 
\label{sec:new}

Let us first start with the proof of Theorem~\ref{thm:mainr2} which, let us recall, 
states that in each hypergraph from $\irt$ with  $m> \binom{\lfloor n/t\rfloor}{rt}$ 
edges there exists a big star which contains a positive fraction of all edges; 
moreover  thick cliques show that this result is sharp. We prove this result in a 
slightly stronger form, which gives a better estimate for the size of the biggest star 
for dense graphs.
Besides, it states that each graph from $\irt$  which has 
nearly~$\binom{\lfloor n/t\rfloor}{rt}$ edges and small maximum degree is thick. 
Here, for $H\in\irt$, by $H_S$ we denote a subgraph consisting of 
$\bigl(rt(t-1)\bigr)$-stars as obtained by 
applying the Structure Theorem~\ref{thm:dt} to $H$.

\begin{theorem}\label{thm:mainr3}
	For $r\ge 1$ and $t\ge 2$ there exists an $n_0$ such that, for every $rt^2$-graph 
	$H\in \irt$ with $n\ge n_0$ vertices and $m\ge \binom{\lfloor n/t\rfloor}{rt}+1$ 
	edges, $H_S$ contains an $\bigl(rt(t-1)\bigr)$-star with at least $\hn$ vertices 
	in the body and at least $m\hn/n-n^{rt-1}> m/(3t)$ edges, where
	\begin{equation}\label{eq:hn}
		\hn=\hn(n,m)=\min\Big\{N:\binom{N-1}{rt-1}\ge \frac{rtm}{n}- rtn^{rt-2}\Big\}
		\ge\frac{n}{t^{rt/(rt-1)}}-(3rt)^{3rt}>\frac{2n}{5t}\,.
	\end{equation}
	On the other hand, if
			$\binom{\lfloor n/t\rfloor}{ \el}-\frac {n^{\el-1}}{2t^{\el-1}(\el-1)!}
			\le m\le \binom{\lfloor n/t\rfloor}{ \el}$,
	then each hypergraph $H\in \irt$ with $m$ edges and  $\Delta(H)\le m/(3t)$ is   
	a subgraph of a thick clique $\widetilde{K}^{rt^2}_{n,t}$; in particular,  
	$\Delta(H)\le\binom{\lfloor n/t\rfloor -1}{rt-1}$.
\end{theorem}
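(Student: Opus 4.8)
Both parts are driven by the Structure Theorem~\ref{thm:dt}, which supplies partitions $V=V_T\cup V_S\cup V_R$ and $H=H_T\cup H_S\cup H_R$ satisfying \ref{it:VR}--\ref{it:VT} with $a=(rt^2)^{r^3t^6}$. Write $p=|V_T|/t$, $\sigma=|V_S|$, $\rho=|V_R|$, so that $tp+\sigma+\rho=n$ and $p\le\lfloor n/t\rfloor$, and decompose $H_S$ into its semi-disjoint $(rt(t-1))$-stars $S_i$ with bodies $B_i\subseteq V_S$ of sizes $b_i$, $\sum_i b_i\le\sigma$. I will use throughout the elementary bounds $|H_T|\le\binom p{rt}$ (by~\ref{it:VR} every edge inside $V_T$ is a union of teams), $e(S_i)\le\binom{b_i}{rt}$ (an edge of $S_i$ is its centre together with $rt$ vertices of $B_i$), and the bound on $|H_R|$ in~\ref{it:VT}. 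A preliminary fact used in both parts is that \emph{if $p=\lfloor n/t\rfloor$ then $H\subseteq\widetilde{K}^{rt^2}_{n,t}$}: then $\sigma+\rho=n\bmod t<t\le rt$, so $H_S=\vn$, and any $h\in H_R$ meets $V_S\cup V_R$ in some $1\le j<t$ vertices while $h\cap V_T$ is a union of whole teams, forcing $t\mid rt^2-j$ and hence the impossible $t\mid j$; thus $H=H_T$ possesses $\lfloor n/t\rfloor$ disjoint inseparable $t$-sets, so $H\subseteq\widetilde{K}^{rt^2}_{n,t}$ and $\Delta(H)\le\binom{\lfloor n/t\rfloor-1}{rt-1}$.

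For the first assertion I would argue by contradiction, assuming no $S_i$ has \emph{both} $b_i\ge\hn$ and $e(S_i)\ge m\hn/n-n^{rt-1}$. The key claim is that then $e(S_i)<b_i\bigl(m/n-n^{rt-2}\bigr)$ for every $i$: if $b_i\le\hn-1$ this comes from $e(S_i)\le\tfrac{b_i}{rt}\binom{b_i-1}{rt-1}\le\tfrac{b_i}{rt}\binom{\hn-2}{rt-1}$ together with the minimality of $\hn$ in~\eqref{eq:hn}, which gives $\binom{\hn-2}{rt-1}<rtm/n-rtn^{rt-2}$; if $b_i\ge\hn$ then $e(S_i)<m\hn/n-n^{rt-1}\le b_im/n-b_in^{rt-2}$ since $b_i\ge\hn$ and $b_in^{rt-2}\le n^{rt-1}$. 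Summing gives $|H_S|<(m/n-n^{rt-2})\sigma$. Substituting this, $|H_T|\le\binom p{rt}$ and~\ref{it:VT} into $m=|H_T|+|H_S|+|H_R|$, and using $tp+\rho=n-\sigma$ to absorb the cross term $|V_T||V_S|n^{rt-3}=tp\sigma n^{rt-3}$ against $-\sigma n^{rt-2}$, one is led to $\tfrac{mtp}{n}-\binom p{rt}<\rho\bigl(an^{rt-2}-\tfrac mn\bigr)\le0$ for $n\ge n_0$ (as $\rho\ge0$ and $m/n\ge n^{-1}\binom{\lfloor n/t\rfloor}{rt}\gg an^{rt-2}$), i.e.\ $m<\tfrac n{rt^2}\binom{p-1}{rt-1}$. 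By the preliminary fact and $m\ge\binom{\lfloor n/t\rfloor}{rt}+1$ we have $p\le\lfloor n/t\rfloor-1$, hence $\binom{p-1}{rt-1}\le\binom{\lfloor n/t\rfloor-2}{rt-1}$; and a direct computation gives $\binom{\lfloor n/t\rfloor}{rt}=\tfrac{\lfloor n/t\rfloor(\lfloor n/t\rfloor-1)}{rt(\lfloor n/t\rfloor-rt)}\binom{\lfloor n/t\rfloor-2}{rt-1}>\tfrac n{rt^2}\binom{\lfloor n/t\rfloor-2}{rt-1}$, the last inequality amounting to $n(rt-1)>(n\bmod t)(\lfloor n/t\rfloor-1)$. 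Thus $m<\binom{\lfloor n/t\rfloor}{rt}$, a contradiction. So some $S_i$ has $b_i\ge\hn$ and at least $m\hn/n-n^{rt-1}$ edges; as all of them contain its (nonempty) centre, $\Delta(H)\ge e(S_i)$, and the lower bounds on $\hn$ in~\eqref{eq:hn} yield $m\hn/n-n^{rt-1}>2m/(5t)-n^{rt-1}>m/(3t)$ because $m\ge\binom{\lfloor n/t\rfloor}{rt}>15t\,n^{rt-1}$ for $n\ge n_0$.

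For the second assertion, $\Delta(H)\le m/(3t)$ together with the first assertion already forces $m\le\binom{\lfloor n/t\rfloor}{rt}$, so $\binom{\lfloor n/t\rfloor}{rt}-\tfrac{n^{rt-1}}{2t^{rt-1}(rt-1)!}\le m\le\binom{\lfloor n/t\rfloor}{rt}$. Apply the Structure Theorem again; by the preliminary fact it suffices to rule out $d:=\lfloor n/t\rfloor-p\ge1$, where now $\sigma+\rho=(n\bmod t)+td<t(d+1)$. If $d\le D_0$ for a suitable constant $D_0=D_0(r,t,a)$, then $\sigma,\rho$, and hence $|H_S|\le\binom\sigma{rt}$ and $|H_R|$, are $O(n^{rt-2})$, while $|H_T|\le\binom{\lfloor n/t\rfloor-1}{rt}=\binom{\lfloor n/t\rfloor}{rt}-\binom{\lfloor n/t\rfloor-1}{rt-1}$, so $m<\binom{\lfloor n/t\rfloor}{rt}-\tfrac{n^{rt-1}}{2t^{rt-1}(rt-1)!}$, contradicting the lower bound on $m$. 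If $d>D_0$, the same comparison gives $|H_S|\ge m-|H_T|-|H_R|\ge\tfrac12 d\binom{\lfloor n/t\rfloor-d}{rt-1}$, which against $|H_S|\le\binom\sigma{rt}\le\binom{t(d+1)}{rt}$ forces $d\ge cn$ for a constant $c=c(r,t)>0$; thus $|V_T|$ is a constant factor below $t\lfloor n/t\rfloor$, so $|H_T|\le(1-\eta)\binom{\lfloor n/t\rfloor}{rt}$ and therefore $|H_S|\ge\tfrac\eta2\binom{\lfloor n/t\rfloor}{rt}=\Omega(n^{rt})$ for a constant $\eta>0$. To contradict $\Delta(H)\le m/(3t)$ one must then exhibit a single star $S_i$ with $\Omega(n^{rt})$ edges, whose centre vertices all have that degree; this is the place where the argument is delicate.

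The main obstacle, as just indicated, is the intermediate regime of the second assertion: when $V_T$ has noticeably fewer than $\lfloor n/t\rfloor$ teams, the counting pins $|H_S|$ to a positive fraction of $m$, but promoting ``$H_S$ is large'' to ``some $S_i$ is large'' cannot be done by crude convexity alone — $e(S_i)\le\binom{b_i}{rt}$ with disjoint bodies summing to $\le n$ only yields $\max_i e(S_i)\gtrsim(|H_S|/n)^{rt/(rt-1)}$ with a constant too weak to beat $m/(3t)$. Closing this gap will need more of the structure, in particular property~\ref{it:VSS} (every edge meeting a star's body lies in that star) and, presumably, a second application of the Structure Theorem (or of Theorem~\ref{prop:25}) to the subhypergraph $H[V\setminus B_i]$ obtained after deleting a star's body, so as to show the stars of $H_S$ cannot all be small. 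By contrast, the bookkeeping in the first assertion — keeping the error terms from~\ref{it:VT}, from the floor $\lfloor n/t\rfloor$, and from the definition~\eqref{eq:hn} of $\hn$ correctly signed so that they cancel — is routine but has to be carried out carefully.
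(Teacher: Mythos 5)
Your treatment of the first assertion is correct in substance and takes a genuinely different route from the paper's. Where you argue by contradiction with a global convexity bound (every star violating the conclusion satisfies $e(S_i)<b_i(m/n-n^{rt-2})$, so $|H_S|<\sigma(m/n-n^{rt-2})$, which together with $|H_T|\le\binom{p}{rt}$ and \ref{it:VT} contradicts $m>\binom{\lfloor n/t\rfloor}{rt}$), the paper's Claim~\ref{tp4} passes to the trace hypergraph $G_S=\{h\cap V_S\colon h\in H_S\}$ on $V_S$, shows its \emph{average degree} is at least $rt\,m/n-rt\,n^{rt-2}>\binom{\hn-2}{rt-1}$, and notes that a component achieving this average degree has at least $\hn$ vertices and at least $\ad(G_S)\hn/(rt)$ edges; since the bodies are disjoint, a component of $G_S$ is a single star. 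Your version only needs the small repairs you more or less acknowledge (the cases $p=0$ and $1\le p<rt$ have to be disposed of separately, which is easy), so for $m\ge\binom{\lfloor n/t\rfloor}{rt}+1$ your argument is a legitimate alternative.

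The genuine gap is in the second assertion, and it is exactly the step you flag yourself: in the range $\binom{\lfloor n/t\rfloor}{rt}-\frac{n^{rt-1}}{2t^{rt-1}(rt-1)!}\le m\le\binom{\lfloor n/t\rfloor}{rt}$ your counting only yields $|H_S|=\Omega(n^{rt})$ when $d\ge cn$, and you cannot promote ``$H_S$ is large'' to ``one star of $H_S$ has more than $m/(3t)$ edges'', which is what is needed to contradict $\Delta(H)\le m/(3t)$ via the centre vertices. The paper closes precisely this gap by proving Claim~\ref{tp4} under the weaker hypothesis~\eqref{m} alone and conditioned only on $V_S\neq\varnothing$: the average-degree computation is carried out relative to $|V_S|$ rather than $n$ (the possibly dominant term $|H_T|\le\binom{|V_T|/t}{rt}$ is charged against the removed vertices of $V_T$ and shown not to depress the ratio), so whenever $V_S\neq\varnothing$ a \emph{single} star of $H_S$ has at least $\hn$ body vertices and at least $m\hn/n-n^{rt-1}>m/(3t)$ edges. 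Hence $\Delta(H)\le m/(3t)$ forces $V_S=\varnothing$, and then Claim~\ref{cl42} (if $H_R\neq\varnothing$ then $|V_R|\ge t$, so $m\le\binom{\lfloor n/t\rfloor-1}{rt}+O(n^{rt-2})$, below the assumed lower bound on $m$) gives $H_R=\varnothing$, whence $H=H_T\subseteq\widetilde{K}^{rt^2}_{n,t}$ and $\Delta(H)\le\binom{\lfloor n/t\rfloor-1}{rt-1}$. So the missing ingredient is a ``one big star'' statement valid for all $m$ satisfying~\eqref{m} and requiring only $V_S\neq\varnothing$; the trace-graph/component argument supplies it directly, whereas your global convexity bound (and the proposed second application of the Structure Theorem to $H[V\setminus B_i]$) does not obviously do so.
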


\begin{proof}
	For given integers $t\ge 2$, and $r\ge 1$, choose $n_0$ so large 
	that all inequalities below hold for $n\ge n_0$.  
	Moreover, let $H\in \irt$, where the number of edges $m$ satisfies
		\begin{equation}\label{m}
		m\ge \binom{\lfloor n/t\rfloor}{\el}-\frac {n^{\el-1}}{2t^{\el-1}(\el-1)!}
		> \frac{n^{\el}}{t^{\el}(\el) !}-\frac{2rt^2 n^{\el-1}}{t^{\el}(\el-1)!}
		> \left(\frac n{rt^2}\right)^{\el}\,.
	\end{equation}
		
	By Theorem \ref{thm:dt}, $H\in \F(t,rt,(rt^2)^{r^3t^6})$, so let us take partitions 
	$V=V_T\cup V_S\cup V_R$ and $H=H_T\cup H_S \cup H_R$ exemplifying this. 
	Then
	\begin{equation}\label{mp4}
		|H|=|H_T| + |H_S| +|H_R|
		\le \binom{ |V_T|/t}{\el} +\binom{|V_S|}{\el}+|V_T||V_S|n^{\el-3} + |V_R|Cn^{\el-2}\,,
	\end{equation}
	where $C=(rt^2)^{r^3t^6}$. As a straightforward consequence of the above inequality 
	we get the following claim. 
	
	\begin{claim}\label{cl42}
		If $V_S=\emptyset$, then $H_R=\varnothing$ and $m\le \binom{\lfloor n/t\rfloor}{\el}$. 
	\end{claim}
	
	\begin{proof}
		Since $V_S=\emptyset$, the vertex set of $H$ is partitioned into sets $V_T$ and $V_R$, 
		where $|V_T|$ is divisible by $t$ and $|V_R|=n-|V_T|$. Recall that $V_T$ consists of 
		$t$-tuples that are inseparable in $H$. Therefore, if $H_R\neq \emptyset$ then 
		$|V_R|\ge t$ and consequently, by (\ref{mp4}),
		\begin{align*}
			m&=|H_T|+|H_R|\le \binom {(n-|V_R|)/t}{\el}+0+|V_R|Cn^{\el-2} 
			\le\binom {\lfloor n/t\rfloor-1 }{ \el}+2tCn^{\el-2}\\
			&< \binom{\lfloor n/t\rfloor}{ \el}-\frac {n^{\el-1}}{2t^{\el-1}(\el-1)!}\,,
					\end{align*}
		contrary to~\eqref{m}. 
		
		Thus we must have $H_R=\emptyset$ and, hence,  
				\[
		m=|H_T|  		\le \binom{\lfloor n/t\rfloor}{\el}\,. \qedhere
		\]
					\end{proof}
	
It turns out that if $V_S\neq \emptyset$, then the maximum degree must be large.

	\begin{claim}\label{tp4}
		If $V_S\neq \emptyset$, then $H_S$ contains an $\bigl(\el(t-1)\bigr)$-star with 
		at least $\hn$ vertices in the body
		and at least $m\hn/n-n^{rt-1}$ edges, where $\hn$ is 
		defined as in (\ref{eq:hn}).
	\end{claim}

	\begin{proof}		We start with bounding from below the average degree $\ad(G)$ 
		of the $\el$-graph $G_S=(V_S,E_S)$ with the set of vertices $V_S$ and the set 
		of edges $E_S=\{h\cap V_S: h\in H_S\}$. 
		Using the upper bound on $|H_R|$ we get  
		\begin{align*}
			\ad(G_S)
			&=\frac{\el|H_S|}{|V_S|}=\frac{\el(m-|H_R|-|H_T|)}{n-|V_R|-|V_T|}\\
			&\ge \frac{\el m-\el|V_R|Cn^{\el-2}-\el\binom{|V_T|/t}{\el}}{n-|V_R|-|V_T|}
				-\frac{\el |V_T||V_S|n^{\el-3}}{|V_S|}\\
		&=\frac{\el m}{n} + \frac {\frac{\el m}n(|V_R|+|V_T|)-\el|V_R|Cn^{\el-2}
			-\el\binom{|V_T|/t}{\el}}{n-|V_R|-|V_T|}-\el|V_T|n^{\el-3}\,.													\end{align*}
	Here the numerator of the second fraction is, due to~\eqref{m}, at least 
		\[
		\el |V_R|n^{\el-2}\left(\frac{ n}{(rt^2)^{\el}}- C\right)
		+ \el |V_T|\left(\frac 1n\binom{\lfloor n/t\rfloor}{\el}-\frac{n^{\el-2}}{2t^{\el-1}
		(\el-1)!}-\frac 1{|V_T|}\binom{|V_T|/t}{\el}\right)
	\]
		and because of $|V_T|\le n-t$ and the fact that  $n$ is large this term is positive.
	For these reasons we have 
		\[
		\ad(G_S)\ge \frac{\el m}{n} - \el n^{\el-2} > \binom{\hn-2}{\el-1}\,.
	\]
		
	Now, each vertex $v$ of degree at least $\ad(G_S)$ must be contained in a 
	component with at least $\hn$ vertices and, since $G_S$ must contain a component 
	whose average degree is at least~$\ad(G_S)$, 
	each such component must have at least $\ad(G_S)\hn/(\el)$ edges. 
	\end{proof}
	
Finally, we can complete the proof of Theorem~\ref{thm:mainr3}. 
If $m> \binom{\lfloor n/t\rfloor}{\el}$,  then, by Claim~\ref{cl42}, 
we have $V_S\neq \emptyset$, and the first part of Theorem~\ref{thm:mainr3} follows
directly from Claim~\ref{tp4}.
On the other hand, if $H\in \irt$ has $m$ edges, where  
\[
	\binom{\lfloor n/t\rfloor}{\el}-\frac {n^{\el-1}}{2t^{\el-1}(\el-1)!}
	\le m \le \binom{\lfloor n/t\rfloor}{\el} \,, 
\]
and $\Delta(H)\le m/(3t)$, then Claim~\ref{tp4} implies that $V_S$ is empty and, 
by Claim~\ref{cl42}, $H_R$ is empty as well. Thus, $H$  must be a subgraph of a thick  
$(rt^2,n,t)$-clique $\tilde K^{rt^2}_{n,t}$.\end{proof}

Once we know that dense graphs from $\irt$ with $m>\binom{\lfloor n/t\rfloor}{\el} $ 
contain vertices of large degree one may ask 
about the structure of graphs which, for a given $m=m(n)$, minimise the maximum degree.
A natural conjecture is that they can be expressed
as  a union of large disjoint $\bigl(rt(t-1)\bigr)$-stars with,
perhaps, some limited number of extra edges like those which intersect the centres of these
stars in sets whose sizes are multiples of $t$. 

In \cite{LP} such a result is proved for the family $\mathcal{J}(1,2)$. 
Namely, it is shown that, for 
large enough $n$, from each $\{0,2,3,4\}$-intersecting 
$4$-graph with $n$ vertices and $m> \binom{\lfloor n/2\rfloor}{2}$
 edges
  that minimises the maximum degree one can 
remove at most~$128$ edges to get a $4$-graph which consists of at most four $2$-stars and, perhaps, 
some number of isolated vertices 
(for details and discussions of this result see~\cite{LP}). 

The remaining part of this section is devoted to the proof of an analogous result 
for~$\irt$ in the general case. As we will see shortly,
a similar result holds whenever $r=1$, while for~$r\ge 2$ a weaker yet quite satisfactory
characterisation of the extremal graphs can be shown. Nevertheless, in order to state our theorem 
more precisely, we need some notation, analogous to those used in \cite{LP}.
     
We define the 
{\it minimum maximum-degree function} of $\irt$ by setting
\[
	\ff(\irtt; n, m)=\min\bigl\{\Delta(H): H=(V,E)\in\irt, |V|=n, \text{\ and\ } |E|=m \bigr\}
\]
for all nonnegative integers $n$ and $m$. The corresponding collection of extremal hypergraphs
is denoted by $\EX(\irtt; n, m)$. 

Note that the function $\ff(\irtt;n,m)$ is always bounded from
below by the average degree $rt^2 m/n$; on  the other hand, one can always find a thick graph 
from $\irt$ such that the degrees of all vertices, except at most $t-1$, are within distance one 
from each other. Hence, from Theorem~\ref{thm:mainr3} it follows that whenever 
$m\le \binom{\lfloor n/t\rfloor}{\el}$ and $n$ is large enough we have 

$$
\lceil rt^2m/n\rceil \le \ff(\irtt; n, m)\le  \lceil rtm/\lfloor n/t\rfloor\rceil=
\lceil rt^2m/n\rceil(1+O(t/n))\,,
$$
i.e.,\ in this range of $m$ the function $\ff(\irtt; n, m)$ is determined up to the first order term. 
Thus, it remains to study the value of $\ff(\irtt; n, m)$ and the structure of 
the extremal graphs from  $\EX(\irtt; n, m)$ for $m>\binom{\lfloor n/t\rfloor}{\el}$. 
For this we require one more concept.

Let us say that an $\bigl(rt(t-1)\bigr)$-star $S$ with some number $N$ of vertices in its
body is {\it heavy} if its minimum vertex degree is at least $2r^2t^3N^{rt-2}$. 
Consider the  process when we  repeatedly remove from a star $S$ the (lexicographically first) vertex of smallest degree until the resulting star, possibly empty, is heavy. 
 The substar $S'$ obtained in this way  is called the {\it core of $S$},
its set of edges is denoted by $\kr(S)$, and by $\krr(S)$ we mean the set of vertices 
forming its body.
	
The first two parts of the following fact list standard properties of the process 
by means of which the core is constructed, while its third part states that
cores have a property reminiscent of condition~\ref{it:VSS} in Definition~\ref{dfn:family}.   

\begin{fact}\label{fact:core} 
	Let $r\ge 1$ and $t\ge 2$. 
	\begin{enumerate}[label=\alabel]
		\item\label{it:f1} There are integers $n_0$ and $c_0$ such that if a hypergraph
			$H\in \irt$
			has $n\ge n_0$ vertices and~$m>\binom{\lfloor n/t\rfloor}{rt}$ 
			edges, then there is a heavy star $S\subseteq H$ with
						\[
				|S|\ge \frac{m\hn(n, m)}{n}-3r^2t^3n^{rt-1}
				\quad \text{ and } \quad
				|\krr(S)|\ge \hn(n, m)-c_0>\frac{2n}{5t}\,,
			\]
						where $\hn(n, m)$ is the number introduced in~\eqref{eq:hn}.
		\item\label{it:f2} For every positive integer $a$ there exists an integer $b$ such 
			that every $\bigl(rt(t-1)\bigr)$-star~$S$ with $N$ vertices in its body 
			and $|S|\ge\binom{N-a}{rt}$ satisfies $|\krr(S)|\ge N-b$.  
		\item\label{it:f3} If $H\in\irt$ and $S\subseteq H$ is a heavy 
			$\bigl(rt(t-1)\bigr)$-star, then every edge of $H$ intersecting the 
			body of $S$ needs to contain the centre of $S$. 
		\end{enumerate}
\end{fact}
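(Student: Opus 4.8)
The plan is to treat the three parts essentially independently, each by unwinding the definition of the core-construction process. For part~\ref{it:f1}, I would start from Claim~\ref{tp4} inside the proof of Theorem~\ref{thm:mainr3}: applying the Structure Theorem to $H$ yields a decomposition with $V_S\ne\emptyset$ (by Claim~\ref{cl42}, since $m>\binom{\lfloor n/t\rfloor}{rt}$), and hence an $\bigl(rt(t-1)\bigr)$-star $S_0\subseteq H_S$ with at least $\hn$ vertices in its body and at least $m\hn/n-n^{rt-1}$ edges. This star need not be heavy, so I run the core-construction process on it. The key calculation is to bound how much the process can throw away: each removed vertex has degree below the current threshold $2r^2t^3N^{rt-2}$, and there are at most $N\le n$ removal steps, so the number of edges lost is at most roughly $n\cdot 2r^2t^3n^{rt-2}=2r^2t^3n^{rt-1}$, and the number of body vertices lost is at most a constant $c_0$ depending only on $r,t$ (because once the body has dropped to a bounded size the average degree of a nonempty such star exceeds any fixed polynomial threshold in that bounded size — this is where I must be slightly careful and argue that the process cannot strip the star below a constant without it already being heavy or empty, and since $|S_0|\ge (n/rt^2)^{rt}$ is genuinely large it cannot terminate empty). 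Combining, the resulting heavy star $S$ satisfies $|S|\ge m\hn/n-n^{rt-1}-2r^2t^3n^{rt-1}\ge m\hn/n-3r^2t^3n^{rt-1}$ and $|\krr(S)|\ge \hn-c_0$, and the final inequality $\hn-c_0>2n/5t$ follows from the lower bound on $\hn$ recorded in~\eqref{eq:hn} once $n$ is large.

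For part~\ref{it:f2} I would argue purely combinatorially about a single star $S$ with $N$ body vertices and $|S|\ge\binom{N-a}{rt}$. The observation is that the core-process removes a vertex only when its degree is small, so if it removed $b+1$ vertices before stopping, then by the time it had removed the first $j$ of them (for $j\le b$) the remaining star still has body size $N-j\ge N-b$ and at least $\binom{N-a}{rt}-j\cdot 2r^2t^3N^{rt-2}$ edges; I want to choose $b=b(a,r,t)$ large enough that this count forces the star to be heavy, i.e., forces some body vertex to have degree $\ge 2r^2t^3(N-j)^{rt-2}$. Since a star with $M$ body vertices and $e$ edges has a body vertex of degree at least $rt\,e/M$, it suffices that $rt\bigl(\binom{N-a}{rt}-b\cdot 2r^2t^3N^{rt-2}\bigr)/N \ge 2r^2t^3N^{rt-2}$, and as $\binom{N-a}{rt}=\frac{1}{(rt)!}N^{rt}(1+O_a(1/N))$ this is a polynomial inequality that holds for all $N$ once $b$ is a suitable constant (one checks the edge set cannot drop below, say, half of $\binom{N-a}{rt}$ during the first $b$ steps). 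Hence the process stops after at most $b$ removals, giving $|\krr(S)|\ge N-b$.

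Part~\ref{it:f3} is the cleanest: let $S\subseteq H$ be a heavy $\bigl(rt(t-1)\bigr)$-star with centre $C$ (of size $rt(t-1)$) and body $B$, so every vertex $v\in B$ lies in at least $2r^2t^3|B|^{rt-2}$ edges of $S$, each of which is $C\cup v\cup(\text{an } (rt-1)\text{-subset of } B\setminus v)$. Suppose some edge $h\in H$ meets $B$, say $v\in h\cap B$, but $C\not\subseteq h$. The edges of $S$ through $v$ intersect $h$ in the fixed set $h\cap(C\cup v)$, which since $C\not\subseteq h$ has size at most $rt(t-1)-1+1=rt(t-1)$ but missing at least one element of $C$ — more to the point, I count intersection sizes: for an edge $e=C\cup\{v,w_1,\dots,w_{rt-1}\}$ of $S$ through $v$, $|e\cap h|=|h\cap C|+1+|\{w_1,\dots,w_{rt-1}\}\cap h|$, and $H\in\irt$ forces this to lie in $I=\{s:t\mid s\text{ or }s\ge rt(t-1)\}$. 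The number of such edges $e$ with a prescribed value of $|\{w_i\}\cap h|$ is at most $|h|^{rt-1}\binom{|B|}{rt-1-j}$ for the relevant $j$, and in particular the edges $e$ with $\{w_i\}\cap h=\emptyset$ all have the same intersection size $|h\cap(C\cup v)|$ with $h$; there are at least $2r^2t^3|B|^{rt-2}-|h|\cdot|B|^{rt-2}\ge |B|^{rt-2}\gg 0$ of them (for $|B|$ large, which is guaranteed since $S$ heavy with nonempty body forces $|B|$ large), so this fixed value $|h\cap(C\cup v)|$ must lie in $I$. I then derive a contradiction by playing the same game with a second body vertex $v'\in B$ not in $h$ (such $v'$ exists as $|B|$ is large and $|h|$ bounded): edges of $S$ through both $v$ and $v'$ avoiding $h$ elsewhere give intersection size $|h\cap C|$ with $h$, while edges through $v$ alone give $|h\cap C|+1$; both must be in $I$, but $I$ contains no two consecutive integers below $rt(t-1)$ unless both are multiples of $t$ — impossible for consecutive integers — so both $|h\cap C|$ and $|h\cap C|+1$ are $\ge rt(t-1)$, forcing $|h\cap C|\ge rt(t-1)-1\ge rt(t-1)-1$; since $|C|=rt(t-1)$ this gives $|h\cap C|\ge rt(t-1)-1$, and one more comparison (an edge through $v$, $v'$ and one further body vertex not in $h$) pins down $|h\cap C|=|h\cap C|$ consistently only if actually $C\subseteq h$, the desired contradiction. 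The main obstacle is making this last intersection-pattern argument in part~\ref{it:f3} fully rigorous: one has to choose a careful finite family of edges of $S$ realising several consecutive candidate intersection sizes with $h$ and exploit that $I$ has no two consecutive elements below $rt(t-1)$, so that all of them must be $\ge rt(t-1)$, which combined with $|C|=rt(t-1)$ and $v\notin C$ forces $C\subseteq h$; the counting that guarantees enough such edges exist (against the $O(|h|\,|B|^{rt-2})$ edges of $S$ that happen to hit $h$ outside $C\cup\{v,v'\}$) is routine but must be done with the explicit heaviness bound $2r^2t^3|B|^{rt-2}$.
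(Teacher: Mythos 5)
Your overall strategy is the paper's (take the star supplied by Theorem~\ref{thm:mainr3}/Claim~\ref{tp4}, run the core process and bound the edge loss by $2r^2t^3n^{rt-1}$; for part~(iii) force two consecutive intersection sizes into $I$), but in parts~(i) and~(ii) the step that controls the body of the core is genuinely flawed. In~(i) you claim the process removes only a constant number $c_0$ of body vertices. That is unjustified and false in general: the star from Claim~\ref{tp4} is only guaranteed to have at least $\hn(n,m)$ body vertices and at least $m\hn/n-n^{rt-1}$ edges, and nothing prevents it from also carrying linearly many body vertices of degree~$1$, all of which the process strips one by one; your parenthetical argument at best rules out the body shrinking below a \emph{bounded} size, whereas you need it to stay above $\hn(n,m)-c_0=\Theta(n)$. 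In~(ii) the error is sharper: you deduce from an average-degree count that ``some body vertex has degree at least the threshold'' and call this heaviness, but heavy means the \emph{minimum} degree clears $2r^2t^3N^{rt-2}$, and the process stops only when the currently smallest degree does so; a large average (or maximum) degree gives no contradiction with a further removal. Both gaps are closed by the one count the paper uses: a star whose body has $M$ vertices has at most $\binom{M}{rt}$ edges, while the core retains all but at most $2r^2t^3n^{rt-1}$ of the original edges. In~(i) this yields $\binom{|\krr(S)|}{rt}\ge |S|>\binom{\hn-1}{rt}-3r^2t^3n^{rt-1}>\binom{\hn-c_0}{rt}$ (using that minimality in~\eqref{eq:hn} gives $\binom{\hn-1}{rt}<(\hn-1)m/n$), hence $|\krr(S)|\ge\hn-c_0$; in~(ii) it yields $\binom{|\krr(S)|}{rt}\ge\binom{N-a}{rt}-2r^2t^3N^{rt-1}\ge\binom{N-b}{rt}$, hence $|\krr(S)|\ge N-b$ — which, incidentally, is what bounds the number of removals, rather than the other way round.

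Part~(iii) follows the paper's idea but contains a slip and a muddled endgame. An edge of $S$ through both $v$ and $v'$ and meeting $h$ nowhere else intersects $h$ in $|h\cap C|+1$ vertices (it still contains $v\in h$), not $|h\cap C|$; to place $|h\cap C|$ itself in $I$ you need an edge of $S$ disjoint from all of $h\cap B$, which exists since at most $rt^2N^{rt-1}$ edges of $S$ meet $h\cap B$ while heaviness gives $|S|\ge N\cdot 2r^2t^3N^{rt-2}/(rt)=2rt^2N^{rt-1}$. Once $|h\cap C|$ and $|h\cap C|+1$ both lie in $I$, no ``one more comparison'' is needed: consecutive integers cannot both be multiples of $t$, so one of them is at least $rt(t-1)$, whence $|h\cap C|\ge rt(t-1)-1$; since $|h\cap C|\le|C|=rt(t-1)$ and $rt(t-1)-1$ is neither a multiple of $t$ nor at least $rt(t-1)$, we get $|h\cap C|=rt(t-1)$, i.e.\ $C\subseteq h$.
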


\begin{proof}
	For the proof of part~\ref{it:f1} we take $n_0$ to be at least as large 
	as the number provided by Theorem~\ref{thm:mainr3}. We then know that 
	for any $H\in\irt$ as above there exists a star $\hat S\subseteq H$ 
	with $|\hat S|\ge \frac{m\hn(n, m)}{n}-n^{rt-1}$. 
		Throughout the process yielding $S=\kr(\hat S)$ 
	we remove at most $2r^2t^3\sum_{i=1}^{n}i^{rt-2}< 2r^2t^3n^{rt-1}$
	edges and thus $S$ has at least the size we claimed. 
	To obtain the desired lower bound on $|\krr(S)|$ we observe that the definition 
	of $\hn=\hn(n, m)$ implies $\frac{(\hn-1)m}n>\binom{\hn-1}{rt}$, whence
		\[
		\binom{|\krr(S)|}{rt}\ge |S| > \binom{\hn-1}{rt}-3r^2t^3n^{rt-1}
		>\binom{\hn-c_0}{rt}  
	\]
		holds for sufficiently large $c_0$ and $n_0$.
	
	For the verification of part~\ref{it:f2} we may take two large constants 
	$b$, $b'$ with $b\gg b'\gg a$. There is nothing to prove in case $N\le b$,
	so let us assume $N> b$ from now on. 
					As above we have $|S\sm \kr(S)|\le 2r^2t^3N^{rt-1}$ and, hence, 
		\[
				|\kr(S)|\ge \binom{N-a}{rt}-2r^2t^3N^{rt-1}
		\ge\binom{N-b'}{rt}\,,
	\]
		which is only possible if $|\krr(S)|\ge N-b'$.
	
	Finally, let $H$ and $S$ be as in~\ref{it:f3}, write $B$ for the set of vertices 
	forming the body of~$S$, set $N=|B|$, and consider any $e\in H$ intersecting $B$ 
	in some vertex $v$. The minimum degree condition satisfied by $S$ yields 
	$|S|\ge 2rt^2N^{rt-1}$. As at most $rt^2N^{rt-1}$ edges of $S$ can intersect $e\cap B$,
	there is an edge $f\in S$ disjoint to $e\cap B$ and, consequently, we have $|e\cap s|\in I$,
	where $s$ denotes the centre of $S$.
	Similarly, for every $w\in e\cap B$ distinct from $v$ there are at most $N^{rt-2}$
	edges of $S$ containing both $v$ and $w$, and thus there is an edge $f'\in S$
	with $(e\cap B)\cap f'=\{v\}$, which proves $|e\cap s|+1\in I$. But the only possibility 
	for the consecutive integers $|e\cap s|$ and $|e\cap s|+1$ to belong to $I$ is that
	$s\subseteq e$, as desired. 
\end{proof}

The following result describes the structure of dense $H\in \EX(\irtt; n, m)$
quite precisely. 

\begin{theorem}\label{prop:25}
	For all integers $t\ge 2$ and $r\ge 1$, 
	there exist an integer~$n_*=n_*(r,t)$ and  constants $c_i=c_i(r,t)$, $i=1,2,3$,
	such that from every $rt^2$-graph $H=(V,E)$ in $\EX(\irtt;n,m)$ with $n\ge n_*$ vertices 
	and $m>\binom{\lfloor n/t\rfloor}{\el}$ edges one can remove at most $c_1$ edges 
	to get a graph which consists of $\ell\le 7t^2$ many $\bigl(rt(t-1)\bigr)$-stars
	 $S^1,S^2,\dots, S^\ell$ and, perhaps, some number of isolated vertices.
	   
	Moreover, we can also assume that 
	\begin{enumerate}[label=\rmlabel]
		\item\label{it:11}$|\krr(S^i)|\ge n/(7t^2)$, for $i=1,2,\dots, \ell-1$;
		\item\label{it:22} $\big|V\setminus \bigcup_{i=1}^\ell \krr(S^i) \big|\le c_2$;
		\item\label{it:33}the centres of the stars $S^1, \dots, S^{\ell}$ are pairwise disjoint. 
			\end{enumerate}

	In particular, we can delete from $H$ at most $c_3n^{(r-1)t}$ edges to get a 
	union of at most $\ell$ vertex disjoint $\bigl(rt(t-1)\bigr)$-stars. 
\end{theorem}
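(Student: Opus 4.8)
The plan is to run an optimisation/extremal argument against the structure already supplied by the Structure Theorem~\ref{thm:dt} and the core machinery of Fact~\ref{fact:core}. Fix $H=(V,E)\in\EX(\irtt;n,m)$ with $n$ large and $m>\binom{\lfloor n/t\rfloor}{\el}$. By Theorem~\ref{thm:mainr3} (or rather Fact~\ref{fact:core}\ref{it:f1}) $H$ contains a heavy $\bigl(rt(t-1)\bigr)$-star $S^1$ with $|\krr(S^1)|>\tfrac{2n}{5t}$ and $|S^1|\ge m\hn/n-3r^2t^3n^{rt-1}$; by Fact~\ref{fact:core}\ref{it:f3} every edge of $H$ that meets the body of $S^1$ already contains its centre, so the edges of $H$ meeting $\krr(S^1)$ genuinely form a star $T^1\supseteq S^1$. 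The first step is to peel off $T^1$, pass to $H':=H-T^1$ on the vertex set $V':=V\setminus\krr(S^1)$ (a set of size $<n-\tfrac{2n}{5t}=\tfrac{n(5t-2)}{5t}$), observe $H'\in\irt$ still, and \emph{iterate}. Since each peeled body has size $>\tfrac{2n}{5t}$ and the bodies are disjoint, after at most $\bigl\lceil\tfrac{5t}{2}\bigr\rceil\le 3t$ rounds the surviving vertex set has size $O(1)$ — more carefully, one tracks that after $j$ rounds one has removed $>\tfrac{2jn}{5t}$ vertices, so the process terminates in fewer than $\tfrac{5t}{2}$ steps, giving $\ell-1<\tfrac{5t}{2}$ big stars and a bounded left-over set on which at most $\binom{c_2}{rt^2}$ further edges live; absorbing that bounded set as one more (possibly tiny) star $S^\ell$ yields $\ell\le 7t^2$ with room to spare. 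This already proves \ref{it:11} (each $S^i$ with $i<\ell$ has body $>\tfrac{2n}{5t}>n/(7t^2)$) and \ref{it:22}.

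\textbf{From stars to disjoint cores.} The subtle point is that the stars produced above are \emph{edge-disjoint} and their bodies are disjoint, but their \emph{centres} need not be disjoint, and an edge may sit in several of the $T^i$. To get \ref{it:33} one argues that the centres interact only in a bounded way: if two centres $s^i,s^j$ (with $i<j$) shared a vertex, then because $S^i$ is heavy its centre is ``locked'' — every edge through $\krr(S^i)$ contains all of $s^i$ — and one uses the $I$-intersecting condition exactly as in the proof of Fact~\ref{fact:core}\ref{it:f3}, playing two edges of $S^i$ against two edges of $S^j$, to force $s^i\subseteq s^j\cup(\text{body of }S^j)$ or a contradiction with $I$ (the consecutive-integers trick: only $s\subseteq e$ makes both $|e\cap s|$ and $|e\cap s|+1$ lie in $I$). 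The upshot is that after deleting a bounded number $c_1$ of edges one may, for each pair, either make the centres disjoint by reassigning at most one vertex, or discover that one star is entirely contained (as a star) inside another and merge them; since there are at most $\binom{7t^2}{2}$ pairs this costs only $O(1)$ edges. Combining the two peeling-and-cleaning steps, the total number of deleted edges is bounded by a constant $c_1=c_1(r,t)$, and what remains is exactly a union of $\ell\le 7t^2$ stars with pairwise disjoint centres plus isolated vertices, proving the main assertion together with \ref{it:11}--\ref{it:33}.

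\textbf{The final quantitative claim.} For the ``in particular'' statement we must turn the union of stars (which may still share centre-vertices with \emph{each other}, no — they are now disjoint by \ref{it:33}, but distinct stars still overlap on body-vs-centre) into \emph{vertex-disjoint} stars. Here the point is purely a counting one: by \ref{it:VSS}-type reasoning every edge of $S^j$ ($j>i$) that meets $\krr(S^i)$ must contain $s^i$, hence lies in the ``link'' $\{e\setminus s^i:\ e\in H,\ s^i\subseteq e\}$, which is an $rt^2-rt(t-1)=rt$-graph on $n$ vertices, so there are at most $\binom{n}{rt}=O(n^{rt})$ such edges — too weak. The correct bound uses that such an edge meets the \emph{body} $\krr(S^i)$ in at least one vertex while containing the $rt(t-1)$-set $s^i$, so it is determined by $s^i$, one vertex of $\krr(S^i)$, and $rt-1$ further vertices, giving at most $|\krr(S^i)|\cdot\binom{n}{rt-1}$ — still $O(n^{rt-1})$, not $O(n^{(r-1)t})$. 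To reach the stated $O(n^{(r-1)t})$ one must invoke the thickness/inseparability built into Definition~\ref{dfn:family}\ref{it:VR}: the overlap edges, meeting the body of $S^i$ in a team-respecting way and containing the $rt(t-1)$-set $s^i$ which itself is a union of teams, are constrained to be unions of $rt$ teams, at most $rt-1$ of which lie outside $s^i$, and of those a bounded number are forced — giving at most $O((n/t)^{rt-1-(rt(t-1)/t)})=O(n^{rt-1-r(t-1)})=O(n^{(r-1)t-1})$ choices per team-vertex, hence $O(n^{(r-1)t})$ in total across all $O(1)$ pairs. Deleting these edges makes the stars vertex-disjoint. \emph{The main obstacle} is precisely this last paragraph: bookkeeping the interaction between the $I$-intersecting/consecutive-integers argument and the team structure tightly enough to get the exponent $(r-1)t$ rather than the naive $rt-1$, and simultaneously keeping the number of deleted edges in the earlier steps genuinely constant (independent of $n$) rather than $O(n^{rt-1})$.
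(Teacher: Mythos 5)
Your peeling step is fine as far as it goes, but the way you dispose of the left-over part contains the central gap. The iteration of Fact~\ref{fact:core}\ref{it:f1} does \emph{not} stop because the vertex set is exhausted; it stops as soon as one of the two conditions $n_i\ge\hn(n,m)$ or $m_i>\binom{\lfloor n_i/t\rfloor}{\el}$ fails. In particular the remaining hypergraph $H^\ell$ can perfectly well be, say, a thick clique on $\Theta(n)$ vertices with $\Theta(n^{\el})$ edges: it lies in $\irt$, contains no large star, and is not ``a bounded left-over set with at most $\binom{c_2}{rt^2}$ edges''. Your claim that after fewer than $5t/2$ rounds only $O(1)$ vertices survive conflates ``the process terminates'' with ``nothing substantial remains''. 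This is exactly where the hypothesis $H\in\EX(\irtt;n,m)$ must be used, and your argument never actually uses it: one has to show by an exchange argument (the paper's Claim about $H^\ell$) that if $H^\ell$ were not, up to $c_1$ edges, a single $\bigl(rt(t-1)\bigr)$-star (which may itself have $\binom{n_\ell-c_4}{rt}$ edges, i.e.\ be huge, not ``tiny''), then one could rebuild a hypergraph $H'\in\irt$ with the same $n$ and $m$ but strictly smaller maximum degree --- replacing $H^\ell$ by one large star, or by $t$ disjoint stars when $n_\ell\ge\hn(n,m)$, and compensating the few extra edges by trimming the cores already found --- contradicting extremality; and in the latter case one must separately rule out $n_\ell>\frac{tn}{t+1}$ by a convexity computation against $m>\binom{\lfloor n/t\rfloor}{\el}$. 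None of this is in your proposal, and without it the bound ``$c_1$ edges'' is unsupported.

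The same omission undermines your treatment of \ref{it:33} and of the final bound. The $I$-intersecting condition alone does \emph{not} force distinct heavy centres to be disjoint: two stars whose centres overlap in a set of size divisible by $t$ (and whose bodies are disjoint) are perfectly compatible with $I$, so the consecutive-integers trick cannot ``lock'' the centres apart; moreover one is describing the structure of $H$ itself, so ``reassigning a vertex'' is not an available move. Disjointness of centres is again a consequence of extremality: if $s_i\cap s_j\ne\varnothing$ one merges $S^i$ and $S^j$ into a single star on fewer than $|\krr(S^i)|+|\krr(S^j)|-7rt^4$ vertices with $m_i'+m_j'-1$ edges, removes one edge from each remaining star and adds $\ell-1$ disjoint edges, obtaining $H'\in\irt$ with $\Delta(H')<\Delta(H)$. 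Finally, for the ``in particular'' part your team-structure heuristic does not apply (the bodies of the stars carry no team structure), and the naive counts you correctly identify as too weak are not the intended route: one deletes all edges meeting $W=\bigcup_{i\le\ell}\krr(S^i)$ in at most $(r-1)t$ vertices; by \ref{it:22} such an edge has at least $rt^2-(r-1)t$ of its vertices inside the set $V\setminus W$ of size at most $c_2$, so there are at most $c_2^{rt^2}n^{(r-1)t}$ of them, and a short argument with Fact~\ref{fact:core}\ref{it:f3} and \ref{it:33} shows the surviving edges form vertex-disjoint stars. So the exponent $(r-1)t$ comes from \ref{it:22}, not from thickness, and this step too is missing from your write-up.
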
 

\begin{proof}
Let us assume that $n$ is sufficiently large, $m>\binom{\lfloor n/t\rfloor}{\el}$, and 
that $H\in \EX(\irtt;n,m)$. The idea for constructing the first $\ell-1$ of the desired stars 
is to apply Fact~\ref{fact:core}\ref{it:f1} iteratively, pulling these stars out of $H$ one 
by one. This process comes to an end when we cannot guarantee anymore to find a star with 
a sufficiently large core in the remaining part of~$H$. Then we argue that the remaining part 
of $H$ which lies outside the stars cannot be large. Otherwise we could delete one edge from 
each large star and all edges of $H$ which do not belong to them and create a new star, 
disjoint from the one already found.  In this way out of $H$ we could construct a new graph 
$H'\in\irt $ with the same number of vertices and edges as $H$ but which has smaller maximum 
degree  contradicting the fact that $H\in \EX(\irtt; n, m)$. A similar argument (finding 
$H'\in \irt$ with $\Delta(H')<\Delta(H)$) shows that the centres of the large stars must 
be disjoint. 

Let us make the above argument precise. 
Set $H^1=H$. Due to Fact~\ref{fact:core}\ref{it:f1} 
there exists a maximal star $S^1\subseteq H^1$ with $|\krr(S^1)|\ge 2n/(5t)$ and 
$|\kr(S^1)|\ge m\hn(n, m)/n-3r^2t^3n^{rt-1}$.
By $H^2$ we denote the hypergraph arising from $H^1$ by the deletion of all 
vertices in $\krr(S^1)$ and all edges in~$S^1$. Note that, by Fact\ref{fact:core}\ref{it:f3},
all edges which intersected removed vertices belonged to removed edges, i.e.\ we deleted from $H_1$
only edges from $S^1$.
If for some integer $i\ge 2$ we have 
just chosen a star $S^{i-1}\subseteq H^{i-1}$ and constructed a hypergraph~$H^i\subseteq H^{i-1}$
 with $n_i$ vertices and $m_i$ edges,
we check whether the conditions
\begin{enumerate}[label=\alabel]
	\item\label{it:Sa} $n_i\ge \hn(n,m)$,
	\item\label{it:Sb} $m_{i}> \binom{\lfloor n_{i}/t\rfloor}\el$
\end{enumerate}
are satisfied. If at least one of them fails we set $\ell=i$ and terminate the procedure,
the last constructed objects being $S^{\ell-1}$ and $H^\ell$.
On the other hand, if both conditions hold, the assumptions of Fact~\ref{fact:core}\ref{it:f1}
are satisfied by $H^i$. Thus we find a maximal star $S^i\subseteq H^i$ 
with $|\krr(S^i)|\ge 2n_i/(5t)$ and $|\kr(S^i)|\ge m_i\hn(n_i, m_i)/n_i-3r^2t^3n_i^{rt-1}$.  
Moreover, we let~$H^{i+1}$ denote the hypergraph with vertex set $V(H^i)\sm \krr(S^i)$ 
and edge set $H^i\sm S^i$.

Notice that for each $i\in [\ell-1]$ property~\ref{it:Sa} of the above process entails  
\[
	|\krr(S^i)|\ge \frac{2n_i}{5t}\ge \frac{2\hn(n, m)}{5t}\ge \frac{4n}{25t^2}
	>\frac{n}{7t^2}\,,
\]
meaning that condition~\ref{it:11} of the theorem holds. Besides, since 
$\krr(S^1), \ldots, \krr(S^{\ell-1})$ are mutually disjoint, it also 
follows that $\ell\le 7t^2$. Denote the centre of $S^i$ by $s_i$ for $i\in [\ell-1]$.
Note also that,  by Fact~\ref{fact:core}\ref{it:f3}, in the process of deleting vertices 
of $\krr(S^i)$, we have  destroyed no edges other than that of $S^i$. 
Thus, not only  $\bigcup S_i\cup H^\ell \subset H$ but, in fact, $\bigcup S_i\cup H^\ell = H$.

Now we study the structure of the last hypergraph $H^\ell$. 
The following result is crucial for our argument. It shows, in particular, that 
our statement about $c_1$ holds.

\begin{claim}\label{cl:pr2}
	There are constants $c_1=c_1(r,t)$ and $c_4=c_4(r,t)$ such that after removing 
	at most $c_1$ edges from $H^\ell$ this hypergraph becomes the union of 
	an $\bigl(rt(t-1)\bigr)$-star with at least 
	$\binom{n_{\ell}-c_4}{rt}$  edges and, perhaps, some number of isolated vertices.
\end{claim}

\begin{proof}
	Suppose that the assertion does not hold. Our aim is to get a contradiction with 
	the assumption that $H\in \EX(\irtt;n,m)$ by constructing a graph $H'\in \irt$ 
	having the same number of vertices and edges as $H$ but a smaller maximum degree.
	Choose some absolute constants $c$, $c'$, $c''$, $c_1$, and $c_4$ depending only 
	on $r$ and $t$, sufficiently large so that all arguments below will work, 
	and obeying the hierarchy
		\[
		c_1\gg c_4\gg c''\gg c'\gg c\,.
	\]
	
	{\it \hskip1em Case 1: $n_\ell < \hn(n, m)$.}

	\smallskip

	Notice that we may assume $n_\ell\ge c_4$, since otherwise an appropriate choice
	of $c_1$ would show that the claim holds with an empty star. Moreover, 
	if $m_\ell>\binom{n_\ell-c''}{rt}$ the desired conclusion can be drawn from 
	Proposition~\ref{prop:silly} (and  Theorem~\ref{thm:dt}). 
	So we may suppose~${m_\ell\le \binom{n_\ell-c''}{rt}}$ from now on.

    The hypergraph $H'$ will have three kinds of edges. First, there will
    be stars $\hat S^1, \dots, \hat S^{\ell-1}$, where each $\hat S^i$  
    is obtained from $\kr(S^i)$ by the omission of a single edge. 
    
    Second, there will be edges serving as ``substitutes'' for the edges in $F^i=S^i\sm\kr(S^i)$ 
    for $i\in [\ell-1]$. The reason for this substitution is that it ``cleans up some space''
    so that in the end $H'\in\irt$ will be true. Let us recall that $|F^i|\le O(n^{\el-1})$
    follows from the construction of cores. Hence, there are disjoint 
    subsets $U^1, \ldots, U^{\ell-1}$ of the vertex set of~$H^\ell$ with 
    $|U^i|=\big\lceil |F^i|/\binom{|\krr(S^i)|}{rt-1}\big\rceil\le c$ for~$i\in[\ell-1]$.  
    Now instead of $F^i$ we put the same number of edges of the type $\{v\}\cup s_i\cup f$ 
    into $H'$, where $v\in U^i$, and 
    $f$ is a subset of the body $\krr(S^i)$ with $\el-1$ elements.   
     
    Third, we include a star with $m_\ell+(\ell-1)$ edges into $H'$ that 
    uses only vertices of $H^\ell$ that are not occupied by the sets $s_i$ and $U^i$ 
    for $i\in [\ell-1]$. There is enough space for such a star, as at most $\ell(rt^2+c)\le c'$
    vertices are occupied, ${m_\ell\le \binom{n_\ell-c''}{rt}}$, and $n_\ell$
    is sufficiently large.
    
    It remains to check that we have indeed $\Delta(H')<\Delta(H)$. The only 
    vertices of $H'$ that might be problematic are in the centre of the new star 
    that has just been created.
    
    However, working carefully with the estimates provided by Theorem~\ref{thm:mainr3}
    and exploiting that we are in the first case, one checks easily that 
        \begin{align*}
    	\Delta(H)&\ge |S^1|\ge\frac{m\hn(n, m)}{n}-3r^2t^3n^{rt-1}\ge \binom{\hn(n, m)-c'}{rt} \\
		&>\binom{n_\ell-c''}{rt}+(\ell-1)\ge m_\ell+(\ell-1)\,.
    \end{align*}
        Thus $H'$ contradicts indeed our assumption that $H\in \EX(\irtt;n,m)$.

	\medskip\goodbreak

	{\it \hskip1em Case 2 : $n_\ell \ge \hn(n, m)$.}

	\smallskip

	This means that the iterative procedure that led us to the stars $S^1, \ldots, S^{\ell-1}$
	stopped owing to the failure of condition~\ref{it:Sb}, i.e., that 
		\begin{equation}\label{eq:ml}
		m_{\ell}\le \binom{\lfloor n_{\ell}/t\rfloor}\el\,.
	\end{equation}
		One can deal with this case 
	in a very similar way as with the previous one 
	but instead of replacing the edges of $H^\ell$ by one large star we replace them 
	by $t$ smaller and mutually disjoint stars with roughly $n_\ell/t$ 
	vertices and $m_\ell/t$ edges.
	
	The inequality~$\frac{m_\ell}{t}\le \binom{n_{\ell}/t-c'}\el$, 
	which is a direct consequence of~\eqref{eq:ml}, shows that there is indeed 
	enough space for such stars.
	 
	Finally, it remains to check that the hypergraph $H'$ generated as above 
	really has a smaller maximum degree than $H$. 
	
	If $n_\ell\le \frac{tn}{t+1}$ this follows from 
		\begin{align*}
		\left\lceil\frac{m_\ell+(\ell-1)}{t}\right\rceil & 
		\le \frac 1t\binom{n/(t+1)}{rt}+O(1)
		\le \frac 1t\left(\frac t{t+1}\right)^{rt}\binom{n/t}{rt}+O(n^{rt-1}) \\
		&\le \frac {4m}{9t} +O(n^{rt-1})
		< \frac{m\hn(n, m)}{n}-3r^2t^3n^{rt-1}\le |S^1|\,.
	\end{align*}
		The case $\frac{tn}{t+1}<n_\ell$, however, is impossible, because due to 
	$n_\ell\le n-\krr(S^1)<\bigl(1-\frac 2{5t}\bigr)n$ it would entail
		\begin{align*}
		m &= \sum_{i=1}^{\ell-1}\bigl(|\kr(S^i)|+|F^i|\bigr)+ m_\ell
		 \le \sum_{i=1}^{\ell-1}\binom{|\krr(S^i)|}{rt}+\binom{\lfloor n_{\ell}/t\rfloor}{rt}
		 +O(n^{rt-1}) \\
		 & \le \binom{n-n_\ell}{rt}+\binom{n_{\ell}/t}{rt}+O(n^{rt-1})
		 <\binom{\lfloor n/t\rfloor}{rt}\,,
	\end{align*}
	where for the last estimate we used that $(1-x)^{rt}+(x/t)^{rt}<(1/t)^{rt}$
	holds for all real $x\in\bigl[\frac{t}{t+1}, 1-\frac 2{5t}\bigr]$.  
	But the above estimate contradicts our initial hypothesis about $m$.
\end{proof}

	By Claim~\ref{cl:pr2} there exists a constant $c_4=c_4(r,t)$ such that $H^\ell$ contains an $(rt(t-1))$-star, call it $S^\ell$, with at least $\binom{n_\ell-c_4}{rt}$ edges as a subgraph. 
	We can therefore apply Fact~\ref{fact:core}\ref{it:f2} and argue that 
	there exists a  constant $c_2$ such that $|\krr(S_\ell)|\ge n_\ell-c_2$. 
Hence $|V\setminus \bigcup^\ell_{i=1} \krr(S^i)|\le c_2$ and~\ref{it:22} follows.
Let $s_\ell$ be the centre 
of $S^\ell$.

In order to verify~\ref{it:33} let us observe first that 
each star $S^i$ consists of at most 
\[
	\binom{|\krr(S_i)|}{rt} + O(n^{rt-1}) 
		\]
edges. It may be helpful to recall each of the sets $\krr(S^1), \ldots, \krr(S^{\ell-1})$
has size $\Omega(n)$. We do not know the same about the last star, but at least 
we may suppose that $\krr(S^\ell)$ is sufficiently large for otherwise we may ignore this 
star and proceed. Now let us assume that there are two stars, $S^i$ and $S^j$, 
$1\le i<j\le \ell$, which do not have disjoint centres. Then we construct a new 
hypergraph $H'\in \irt $ out of $H\in \EX(\irtt;n,m)$ in the following way. We delete 
all the edges of the stars $S^i$ and~$S^j$, say of $m'_i$ and $m'_j$ edges respectively, 
and on the vertex set $\krr(S^i)\cup \krr(S^j)$ we create an $\bigl(rt(t-1)\bigr)$-star $S^{ij}$
which has $m'_i+m'_j-1<\Delta(H)$ edges 
and which uses as few vertices as possible. Due to $|\krr(S^i)|\ge\Omega(n)$
and $|\krr(S^j)|\ge \Omega(1)$ we have 
\[
	\binom{|\krr(S^i)|}{rt}+\binom{|\krr(S^j)|}{rt}+O(n^{rt-1})
	<\binom{|\krr(S^i)|+|\krr(S^j)|-8rt^4}{rt}\,,
\]
and hence $S^{ij}$ uses fewer
than $|\krr(S^i)|+|\krr(S^j)|-7rt^4$ vertices. Now we remove one edge from each of the 
existing stars
$S^t$, where $t=1,2,\dots,\ell$, and $t\neq i,j$, and add $\ell-1\le 7t^2$
disjoint edges to the hypergraph. Such a hypergraph $H'\in \irt$ has a smaller maximum degree 
than $H$, which contradicts the fact that $H\in \EX(\irtt;n,m)$. 
Thus, the centres of the stars $S^i$, $i=1,2,\dots, \ell$, are pairwise 
disjoint, as claimed in clause~\ref{it:33} of the theorem. 
For later use we record that using Fact~\ref{fact:core}\ref{it:f3} one can show 
that $s_i\cap\krr(S^j)=\varnothing$ holds whenever $i, j\in [\ell]$.

Finally, we need to argue that we can delete from $H\in \EX(\irtt;n,m)$ at most
$c_3 n^{(r-1)t}$ additional edges to make $S^1,\dots, S^\ell$ vertex disjoint. 
We contend that it suffices for this purpose to delete all edges intersecting
the set $W=\bigcup_{i=1}^\ell \krr(S^i)$ in at most $(r-1)t$ vertices. 
Notice that due to~\ref{it:22} the resulting hypergraph $\widetilde H$ 
differs in at most $c_2^{rt^2}n^{(r-1)t}$ edges from~$H$. Owing to 
of Fact~\ref{fact:core}\ref{it:f3} and~\ref{it:33} there is for every edge $e\in \widetilde H$
a unique~$i\in [\ell]$ with $e\in S^i$; for this $i$ we have 
$|e\cap \krr(S^i)|\ge (r-1)t+1$ and, hence, $e$ has at most $t-1$ vertices outside
$s_i\cup\krr(S^i)$. These vertices cannot belong to the centre $s_j$ of another star,
for then~$e$ would have a forbidden intersection with every edge in $\kr(S^j)$. 
Now assume that one of the vertices in $e\sm\bigl(s_i\cup \krr(S^i)\bigr)$, say $v$,
would belong in $\widetilde H$ to another star as well. This means that there 
are an index $j\ne i$ and an edge $f\in S^j\cap \widetilde H$ with $v\in e\cap f$.
Due to $f\in \widetilde H$ at most $t-1$ vertices of $f$ are outside $s_j\cup \krr(S^j)$
and thus we have $1\le |e\cap f|\le t-1$, which is absurd. Thus $\widetilde H$ 
is indeed a union of $\ell$ vertex disjoint stars.
 \end{proof}
 
Let us comment briefly on the structure of $H\in \EX(\irtt;n,m)$ described in 
Theorem~\ref{prop:25}.
Once we know Theorem~\ref{prop:25} the estimate for the number of stars $\ell$ 
can be easily improved to the optimal $\ell\le \lceil t^{rt/(rt-1)}\rceil$ (see \cite{LP}, 
where a similar argument is used for $r=1$, $t=2$). 
However, we cannot significantly decrease the number of edges needed to make the stars vertex 
disjoint.  To see this, let us consider the $rt^2$-graph $\tilde H\in \EX(\irtt;n,m)$ 
with vertex set $V=U_1\dcup U_2\dcup C_1 \dcup C_2\dcup T$, where $|U_1|=|U_2|=u$, 
$|C_1|=|C_2|=rt(t-1)$, $|T|=t$, whose set of edges consists of:
\begin{itemize}
\item $\binom{u}{rt}$ subsets which are unions of $C_1$ and some $rt$-element subset of $U_1$,
\item $\binom{u}{rt}$ subsets which are unions of $C_2$ and some $rt$-element subset of $U_2$,
\item $\binom{u}{(r-1)t}$ subsets which are unions $T$, $C_1$ and some $(r-1)t$-element subset 
of $U_1$,
\item $\binom{u}{(r-1)t}$ subsets which are unions $T$, $C_2$ and some $(r-1)t$-element subset 
of $U_2$,
\item and a thick clique on $T\cup C_1\cup C_2$ whose teams are $T$ and partitions 
of $C_1$, $C_2$.
\end{itemize}

It is easy to see that $\tilde H\in \EX(\irtt;n,m)$ with $n=2u+2rt(t-1)+t$ and the 
appropriate~$m$.
On the other hand, up to a thick clique of bounded size, $\tilde H$ consists of two stars 
with centres $C_1$ and $C_2$ and 
to make them vertex disjoint one must delete at least $\Omega(n^{(r-1)t})$ edges.

Finally, let us notice that from Theorem~\ref{prop:25} it follows that almost all the 
edges of dense extremal graphs from $\EX(\irtt;n,m)$ are contained in at most $\ell$ stars 
among which $\ell-1$ are roughly equal and only one can be a bit smaller than the others. 
Having this in mind one can easily compute  the scaled extremal function 
\[
	 f_{rt}(x)=\lim_{n\to\infty}
	 \frac{\ff(\irtt; n,x\binom{n-rt(t-1)}{\el})}{\binom{n-rt(t-1)}{\el}}\,.
\]
From Theorems~\ref{thm:mainr3} and~\ref{prop:25} we know that the function is well defined
for $x\in [0,1]\setminus \{t^{-tr}\}$. Furthermore, besides the point $x=t^{-tr}$ where it
jumps from $0$ to some value which is at least $t^{-t^2r^2/(rt-1)}$, it is continuous everywhere. It is also 
smooth everywhere except the points $x=j^{1-tr}$ for 
$j=2,3,\dots,  \lceil t^{tr/(rt-1)}\rceil-1$ (see \cite{LP} where details are worked out for 
the case  $r=1$, $t=2$).

\section{Tools}\label{sec:tools}

The purpose of this section is to gather three statements that will turn out to be 
useful in the proof of the Structure Theorem. While the first two of them are fairly 
well known, the third one (see Lemma~\ref{lem:dl} below) could very well be new.

\subsection{Divisible set systems} Given a natural number $t\ge 2$ we shall say that 
a set system~$(V, \cE)$ is {\it $t$-divisible} if for any two distinct edges $e, e'\in \cE$
the size $|e\cap e'|$ of their intersection is a multiple of $t$. The problem to study
upper bounds on the size of such set systems with additional assumptions on the 
behaviour of the sizes of the edges modulo $t$ was first studied, in the particular case $t=2$,
by Berlekamp~\cite{Be69}, who realised that ideas pertaining to linear algebra can be applied 
in such contexts. At a later occasion we will need a variant of one of his results that was first 
observed, in a more general form, by Babai and Frankl (see~\cite{BF80}*{Theorem~1}).

\begin{lemma}\label{lem:linalgt}
	Let $(V, \cE)$ be a $t$-divisible set system for some natural number $t\ge 2$. 
	If $|e|\equiv 1 \pmod{t}$ holds for all $e\in\cE$, then $|\cE|\le |V|$.
\end{lemma}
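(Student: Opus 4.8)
The plan is to run the standard linear‑algebra (Gram matrix) argument, being a little careful because $t$ need not be prime. Enumerate $\cE=\{e_1,\dots,e_N\}$ and let $A$ be the $N\times |V|$ incidence matrix over $\QQ$, whose $(i,v)$ entry is $\mathbbm 1[v\in e_i]$. Form the Gram matrix $M=AA^{\mathsf T}$, an $N\times N$ integer matrix with $M_{ij}=|e_i\cap e_j|$. By hypothesis every diagonal entry satisfies $M_{ii}=|e_i|\equiv 1\pmod t$ and every off‑diagonal entry satisfies $M_{ij}=|e_i\cap e_j|\equiv 0\pmod t$; hence $M=\mathrm{Id}_N+tN_0$ for some integer matrix $N_0$.

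The key step is to observe that $M$ is nonsingular. Reducing modulo $t$ gives $\overline M=\mathrm{Id}_N$ over the ring $\ZZ/t\ZZ$, and since the determinant commutes with the ring homomorphism $\ZZ\to\ZZ/t\ZZ$ we get $\det M\equiv \det\overline M=1\pmod t$. (Equivalently, expanding $\det(\mathrm{Id}_N+tN_0)$ by the Leibniz formula, the identity permutation contributes $\prod_i(1+t(N_0)_{ii})\equiv 1$, while any other permutation fixes at most $N-2$ indices and therefore contributes a multiple of $t^2$.) In particular $\det M$ is a nonzero integer, so $\mathrm{rank}_\QQ M=N$.

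Finally, $N=\mathrm{rank}_\QQ M=\mathrm{rank}_\QQ(AA^{\mathsf T})\le \mathrm{rank}_\QQ A\le |V|$, which is the claim $|\cE|\le|V|$.

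I do not expect a genuine obstacle here; the only point that needs care is that $t$ is allowed to be composite, so one cannot simply say ``$M$ is the identity over a field.'' This is precisely why the argument is phrased through $\det M\equiv 1\pmod t$ rather than through the rank of $M$ modulo $t$: an integer congruent to $1$ modulo $t$ is automatically nonzero, which is all that is needed. (This is the special case $L=\{0\}$, $K=\ZZ/t\ZZ$ of the Babai--Frankl result cited above; we include the short self‑contained proof for completeness.)
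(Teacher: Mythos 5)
Your proof is correct, and it takes a slightly different route than the paper. The paper handles the possible compositeness of $t$ by passing to a prime factor $p\mid t$ and viewing the characteristic vectors of the edges inside $\mathbbm{F}_p^{V}$: pairing a hypothetical linear dependency with one of its edges $e_1$ yields $\alpha_1=0$, so the vectors are linearly independent over $\mathbbm{F}_p$ and $|\cE|\le\dim \mathbbm{F}_p^{V}=|V|$. You instead stay over $\ZZ$ and use the Gram matrix $M=AA^{\mathsf T}$: since $M\equiv \mathrm{Id}\pmod t$, its determinant is $\equiv 1\pmod t$, hence nonzero, so $M$ has full rank over $\QQ$ and $|\cE|=\operatorname{rank} M\le\operatorname{rank} A\le |V|$. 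Both arguments are sound and of comparable length; the paper's version gives the slightly stronger conclusion that the characteristic vectors are independent over $\mathbbm{F}_p$ (which is the form of the Babai--Frankl result it cites), while yours avoids invoking a prime factor altogether and works directly modulo the possibly composite $t$, at the cost of routing through the determinant/Gram-matrix formalism rather than a one-line independence argument. Your parenthetical Leibniz-expansion remark is also fine (any non-identity permutation picks up at least two off-diagonal entries, so even divisibility by $t$ alone would suffice there).
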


\begin{proof}
	Let $p$ denote a prime factor of $t$. We identify the members of $\cE$ with vectors 
	from the $|V|$-dimensional vector space $\mathbbm{F}_p^V$ via characteristic functions
	and contend that the stronger conclusion that $\cE$ is linearly independent holds.
	To see this, one looks at a hypothetical linear dependency 
	$\alpha_1e_1+\dots+\alpha_ne_n=0$ with distinct $e_i\in \cE$ and certain numbers
	$\alpha_i\in \mathbbm{F}_p\setminus \{0\}$, where $n\ge 1$.
	Taking the standard scalar product with~$e_1$ we obtain 
	\[
		0=\langle e_1,0\rangle=\langle e_1,\alpha_1e_1+\dots+\alpha_ne_n\rangle
		=\sum_{i=1}^{n}\alpha_i\langle e_1,e_i\rangle = \alpha_1,
	\]
	which is absurd.
\end{proof}

\subsection{Delta systems}
A set system $\F$ is called a {\em sunflower} (or a {\em $\Delta$-system}) if there exists 
a (possibly empty) set  $S$ of vertices such that the intersection of any two distinct 
edges of~$\F$ is equal to $S$. This constant intersection $S$ is called the {\em kernel} 
of the sunflower. 

In~1960 Erd\H os and Rado~\cite{ER} proved their ``sunflower lemma'' saying 
that any sufficiently large collection of finite sets of bounded size contains 
big sunflowers. 

\begin{theorem}\label{thm:sunflower}
	For all positive integers $a$ and $b$, any collection of more than $b!a^{b+1}$ sets of 
	cardinality at most $b$ contains a $\Delta$-system with more than $a$ elements. 
\end{theorem}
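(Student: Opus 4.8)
The plan is to argue by induction on the uniformity bound $b$. For the base case one can take $b\le 1$: a collection $\F$ with $|\F|>b!a^{b+1}$ then consists of the empty set (occurring at most once) together with singletons, and since $|\F|\ge a+1$ at least two of its members are singletons — distinct singletons form a $\Delta$-system with empty kernel, and when $a=1$ one may simply pick any two members of $\F$, which always form a $\Delta$-system. So I would only need to carry out the inductive step carefully.

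For the inductive step, suppose the statement holds for uniformity $b-1$, and let $\F$ be a collection of more than $b!a^{b+1}$ sets each of cardinality at most $b$. First I would choose a \emph{maximal} subcollection $\mathcal{D}\subseteq\F$ whose members are pairwise disjoint, and set $d=|\mathcal{D}|$. If $d>a$, then $\mathcal{D}$ itself is a $\Delta$-system with kernel $\emptyset$ and more than $a$ members, and we are done. So I may assume $d\le a$ and put $Y=\bigcup_{D\in\mathcal{D}}D$, so that $|Y|\le db\le ab$; by maximality of $\mathcal{D}$, every member of $\F$ meets $Y$.

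Next I would apply the pigeonhole principle to obtain a vertex $y\in Y$ lying in more than $b!a^{b+1}/|Y|\ge(b-1)!\,a^{b}$ members of $\F$. Writing $\F_y$ for the collection of these members and setting $\F_y^-=\{F\setminus\{y\}\colon F\in\F_y\}$, the map $F\mapsto F\setminus\{y\}$ is injective on $\F_y$, so $\F_y^-$ has more than $(b-1)!\,a^{b}$ sets, each of cardinality at most $b-1$. By the induction hypothesis $\F_y^-$ contains a $\Delta$-system with more than $a$ members, with some kernel $S$; adjoining $y$ to each of its petals and to $S$ yields a $\Delta$-system inside $\F_y\subseteq\F$ with kernel $S\cup\{y\}$ and more than $a$ members, completing the induction.

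There is no genuine obstacle here; the one point requiring attention is that the numerical threshold must be passed down with exactly the right loss — dividing $b!a^{b+1}$ by the trivial bound $|Y|\le ab$ leaves precisely $(b-1)!a^{b}$, which is the hypothesis needed at uniformity $b-1$. The remaining ingredients (maximality of the disjoint subcollection, injectivity of deleting $y$, and reinserting $y$ into both the kernel and the petals) are routine.
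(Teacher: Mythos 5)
The paper does not actually prove this statement: it is the Erd\H{o}s--Rado sunflower lemma, which the paper merely quotes with a reference to the original 1960 article, so there is no internal proof to compare yours against. Your argument --- induction on $b$, taking a maximal pairwise disjoint subfamily (which is either already a $\Delta$-system with more than $a$ members and empty kernel, or has union $Y$ of size at most $ab$), pigeonholing a vertex $y\in Y$ of degree greater than $b!a^{b+1}/(ab)=(b-1)!a^{b}$, deleting $y$, applying the induction hypothesis, and reinserting $y$ into the kernel and all petals --- is precisely the classical Erd\H{o}s--Rado proof, and the arithmetic does hand down exactly the threshold needed at uniformity $b-1$. Two details deserve a line each. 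First, in the base case you need more than $a$ pairwise disjoint singletons, not merely two; this does follow from $|\mathcal{F}|>a^{2}$, since at most one member is empty and $a^{2}\ge a+1$ for $a\ge 2$, while $a=1$ is trivial as you note, but the sentence ``at least two of its members are singletons'' is not the bound you actually use. Second, your claim that every member of $\mathcal{F}$ meets $Y$ fails when $\varnothing\in\mathcal{F}$; however, in that case $\varnothing$ lies in the maximal disjoint family, so $|Y|\le (a-1)b$, and after discarding that single set the pigeonhole step still yields a vertex of degree strictly exceeding $(b-1)!a^{b}$, so this is a cosmetic fix rather than a genuine gap.
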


It should perhaps be pointed out that $b!a^{b+1}$ is not the least number $f(a, b)$ for which
this statement is true. In fact, Erd\H{o}s and Rado themselves stated a marginally better 
but less clean upper bound on this number in~\cite{ER}*{Theorem III}, but despite the 
considerable attention that the problem to improve our understanding of the growth behaviour 
of this function has received (see e.g.,~\cite{Ko97}) the progress on this problem has 
been rather slow. For the purposes of the present article, however, even knowing the exact 
value of $f(a, b)$ would be quite immaterial.
\subsection{Divisible pairs of set systems}

The next result makes use of the following concept. 

\begin{definition}
	Let $\F$ and $\G$ denote two set systems with the same vertex set $V$ and let~$q$ be 
	a positive integer. We say that the pair $(\F, \G)$ is {\it $q$-divisible} if for all 
	$f\in \F$ and $g\in \G$ the size $|f\cap g|$ of their intersection is divisible by $q$. 
\end{definition}

Let us emphasise that the edges $f$ and $g$ occurring in this definition are not required 
to be distinct. In other words, if $e\in \F\cap\G$, then $|e|$ needs to be divisible by $q$.
 
The result that follows will often help us to analyse the structure of divisible pairs.  

\begin{lemma}[Decomposition lemma]\label{lem:dl}
	Suppose that $k$, $q$ are positive integers and that $\F, \G$ are two set systems 
	with the same vertex set $V$ such that 
	\begin{enumerate}
		\item[$\bullet$] all members of $\F\cup \G$ have size at most $k$,  
		\item[$\bullet$] and the pair $(\F,\F\cup \G)$ is $q$-divisible.
	\end{enumerate}
	Then there is a set system $\cH$ on $V$ with the following properties:
	\begin{enumerate}[label=\rmlabel]
		\item\label{it:dc1} $\Delta(\cH)\le k^{2k}$;
		\item\label{it:dc2} $\cH$ is an antichain (that is, $x\not\subseteq y$ holds 
			for all distinct $x, y\in\cH$);
		\item\label{it:dc3} Every edge of $\F$ is a disjoint union of edges from $\cH$;
		\item\label{it:dc4} The pair $(\cH,\cH\cup \G)$ is $q$-divisible.
	\end{enumerate}
\end{lemma}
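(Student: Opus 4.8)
The natural approach is to build $\cH$ greedily by repeatedly extracting a common ``atom'' shared by many edges of $\F$ and carving it out of all those edges. Concretely, I would run the following process. As long as $\F$ contains a nonempty edge, pick a nonempty set $W$ that is contained in the largest possible number of edges of $\F$, and among all such sets of maximum popularity pick one of \emph{maximal} size; add $W$ to $\cH$, and then replace every $f\in\F$ containing $W$ by $f\setminus W$ (discarding any edges that thereby become empty). Iterate until $\F$ is exhausted; then output the resulting collection $\cH$ (with multiplicities removed). Property~\ref{it:dc3} is then immediate from the construction: each original edge of $\F$ is peeled apart into a disjoint union of the $\cH$-pieces removed from it in successive rounds.

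\textbf{Verifying the antichain and divisibility properties.} For~\ref{it:dc2}, the key point is that whenever we select $W$ it is a \emph{maximal} set of its popularity; I would argue that if some earlier-chosen $W'$ satisfied $W'\subsetneq W$, then at the moment $W'$ was chosen $W$ was also present in all the edges $W'$ was present in (they still contained all of $W$ at that stage, since nothing had yet been removed from them along these edges), contradicting the maximal-size choice of $W'$ — hence no chosen set is a proper subset of a later one; the reverse containment is excluded by the popularity bookkeeping in the same way. For~\ref{it:dc4}, I would show by induction on the rounds that the pair $(\F_i, \F_i\cup\G)$ stays $q$-divisible, where $\F_i$ is the edge system after $i$ rounds: removing a common subset $W$ from a family of edges changes all the relevant pairwise intersections by a \emph{common} amount, namely by $|W|$ or by $|W\cap g|$, and one checks that these offsets are themselves multiples of $q$ precisely because $W$ is an intersection-pattern of old edges that were $q$-divisible against $\F\cup\G$. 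Passing to the final $\cH$, whose edges are exactly the $W$'s selected, then gives $q$-divisibility of $(\cH,\cH\cup\G)$.

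\textbf{The degree bound, which is the crux.} The hard part is property~\ref{it:dc1}, the bound $\Delta(\cH)\le k^{2k}$. Fix a vertex $v$ and count how many selected sets $W$ contain $v$. I would stratify the process by the sizes of the sets chosen: a set of size $s$ can be chosen at most a bounded number of times ``through'' $v$, because each time it is chosen it is removed from all edges it sits in, and an edge has size at most $k$ so it can only absorb a limited number of such removals before being used up — more carefully, I would track, for each residual edge $f$ still containing $v$, the sequence of pieces peeled off it, note there are at most $k$ of them, and then bound the total number of distinct $W\ni v$ by an $k$-fold branching over the possible sizes and the possible ``profiles'' of these peelings, getting something like $\prod_{s=1}^{k}(\text{number of choices at size }s)$, which one arranges to be at most $k^{2k}$. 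The genuinely delicate bookkeeping here — making the greedy ``most popular, then largest'' rule interact correctly with the fact that popularity can only decrease and sizes are bounded by $k$ — is where the real work lies; everything else is essentially forced by the construction. I expect the clean way to present this is to prove a self-contained combinatorial lemma: if a multiset of at most $k$-element sets is repeatedly reduced by removing maximal-popularity sets, then each vertex lies in at most $k^{2k}$ of the removed sets.
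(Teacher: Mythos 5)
There is a genuine gap, and it sits not where you locate the difficulty (the degree bound) but in property (iv): your greedy construction never consults $\G$, and the pieces it extracts need not meet the members of $\G$ --- nor each other --- in multiples of $q$. The set $W$ you select is, by your own popularity bookkeeping, the intersection of all current residual edges containing it, and pairwise $q$-divisibility of $(\F,\F\cup\G)$ simply does not pass to such derived sets: $|f_1\cap f_2\cap g|$ is not controlled by $|f_1\cap f_2|$, $|f_1\cap g|$, $|f_2\cap g|$. Concretely, take $q=2$, $k=4$, $V=\{1,\dots,6\}$, $\F=\{f_1,f_2\}$ with $f_1=\{1,2,3,4\}$, $f_2=\{1,2,5,6\}$, and $\G=\{g\}$ with $g=\{1,3,5\}$. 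The hypothesis holds, since $|f_1|=|f_2|=4$ and $|f_1\cap f_2|=|f_1\cap g|=|f_2\cap g|=2$; but your first round selects $W=\{1,2\}$ (the largest set of maximal popularity $2$) and the later rounds select $\{3,4\}$ and $\{5,6\}$, and $|\{1,2\}\cap g|=|\{3,4\}\cap g|=1$, so (iv) fails. The sentence ``one checks that these offsets are themselves multiples of $q$ because $W$ is an intersection-pattern of old edges'' is exactly the unjustified --- and false --- step. Moreover the defect is not a repairable detail of the greedy rule: in this example one can check that the only family satisfying (iii) and (iv) simultaneously must keep $f_1$ and $f_2$ whole, so whether an edge may be split at all depends on $\G$, and no procedure that ignores $\G$ can be correct.

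The paper proceeds differently: it first enlarges $\F$ to a family $\F^*$ of sets of size at most $k$ that is maximal subject to $(\F^*,\F^*\cup\G)$ being $q$-divisible, and takes $\cH$ to be the inclusion-minimal nonempty members of $\F^*$. Then (ii) and (iv) are built in; (iii) follows by induction on $|f|$, because for $h\subsetneq f$ with $h,f\in\F^*$ the set $f\setminus h$ again has all intersections and its size divisible by $q$ and hence lies in $\F^*$ by maximality; and (i) is where the real work happens, via the Erd\H{o}s--Rado sunflower lemma: more than $k^{2k}\ge k!\,k^{k+1}$ minimal sets through a vertex $v$ would contain a sunflower with more than $k$ petals, whose kernel again has all intersections divisible by $q$, hence belongs to $\F^*$, forcing it to be empty by minimality although it contains $v$. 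Ironically, for your greedy the degree bound you call the crux is trivial: a popularity count shows that every vertex of a selected $W$ lies only in residual edges containing all of $W$, so the selected sets are pairwise disjoint and $\Delta(\cH)\le 1$. This only underlines that the substance of the lemma is reconciling (iii) with (iv), which your sketch does not address.
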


\begin{proof}
	Without loss of generality we may assume that given $k$, $q$, and $\G$, the set system $\F$ 
	is maximal with respect to inclusion,  i.e., that for every set system $\F^*\supsetneqq \cF$ 
	with $|f|\le k$ for all $f\in \F^*$ the pair $(\F^*,\F^*\cup \G)$ fails to be $q$-divisible.
	
	Now we define $\cH$ to be the collection of those members of $\cF\sm\{\vn\}$ that are 
	minimal with respect to inclusion, i.e., we set 
		\[
		\cH=\{h\in \F\sm\{\vn\}\colon \text{ if } f\in \F \text{ and }f\neq \vn, h, 
		\text{ then } f\nsubseteq h\}\,.
	\]
	
	This choice of $\cH$ makes part~\ref{it:dc2} obvious and~\ref{it:dc4} follows directly
	from $\cH\subseteq \F$. 
	
	Assuming that~\ref{it:dc3} would be false let $f\in\cF$ be chosen with $|f|$ minimum 
	such that~$f$ is not expressible as a disjoint union of appropriate edges from $\cH$.
	Since the empty set is equal to the empty union, we have $f\ne\vn$. Moreover, $f$ 
	cannot belong to $\cH$ and, consequently, there exists some $h\in \cH$ with $h\subseteq f$
	and $0< |h|< |f|$. Notice that for every~$g\in\F\cup\G$ the number 
	$|(f\sm h)\cap g|=|f\cap g|-|h\cap g|$ is divisible by~$q$. Besides $|f\sm h|=|f|-|h|$
	is divisible by $q$ as well. Owing to the maximality
	of $\F$ it follows that $(f\sm h)\in\F$. But in view of our minimal choice of~$f$ 
	this means that $f\sm h$ is a disjoint union of edges from $\cH$ and, hence, so is~$f$.
	Thus $\cH$ must satisfy~\ref{it:dc3}. 
	 
    Now it remains to show that $\cH$ has bounded maximum degree. Assume for the sake  	
    of contradiction, that there exists a vertex $v\in V$ contained in more than $k^{2k}$ 
    edges of~$\cH$ and look at the set system
     \[
    	\F_v=\{h\in\cH\colon  v\in h\}\,.
	\]
	
	In view of $|\F_v|> k^{2k}\ge k!k^{k+1}$ Theorem~\ref{thm:sunflower} reveals that 
	$\F_v$ contains a $\Delta$-system $\F^*$ with more than $k$ elements. Denote the kernel 
	of $\F^*$ by $e$ and observe that, since $|\F^*|>k$, for all edges $g\in \F\cup \G$, 
	the size of the intersection $|e \cap g|$ is divisible by $q$. Moreover, by~$|\F^*|\ge 2$
	again, one can express $e$ as the intersection of two members of $\F^*$, 
	whence $|e|$ is divisible by $q$ as well. 
	
	Together with the maximality of $\F$ these facts imply $e\in \F$. 
	Using $|\F^*|\ge 2$ again we get some 
	$h\in \F^*\subseteq \cH$ properly containing $e$ and by our definition of $\cH$ this 
	is only possible if $e=\vn$. But, on the other hand, we certainly have $v\in e$. 
	This contradiction concludes the proof of~\ref{it:dc1} and, hence, the proof of the 
	decomposition lemma.
\end{proof}

\section{Proof of the Structure Theorem} \label{sec:4}

This entire section is dedicated to the proof of Theorem~\ref{thm:dt}.
Let integers $r\ge 1$ and~$t\ge 2$ as well as an $rt^2$-uniform hypergraph
$H=(V,E)$ with $|V|=n$ be given such that 
the size of the intersection of any two edges of $H$ belongs to 
the set 
\[
	I=\{s\colon t\mid s \textrm{ or } s\ge rt(t-1)\}\,,
\]
which means $H\in \irt$. 
We shall show that $H\in \F\bigl(t,rt,(rt^2)^{r^3t^6}\bigr)$.

Let us start by colouring all those subsets $f\subseteq V$ with $|f|\le rt(t-1)+1$ 
{\it red} that are kernels of sunflowers consisting of at least $rt^2$ 
edges of $H$. Recall that the latter condition means that there are to exist 
$rt^2$ disjoint sets $f_1, \dots, f_{rt^2}\subseteq V$ of 
size $rt^2-|f|$ such that $f\cup f_i\in H$ holds for every $i\in [rt^2]$. 
We denote the set system on $V$ whose edges are the red sets by $H_{\text{red}}$.
By $H^*_{\red}$ we mean the $\bigl(rt(t-1)+1\bigr)$-uniform hypergraph on $V$ 
whose edges are the red $\bigl(rt(t-1)+1\bigr)$-sets and finally we put
$\widehat{H}_{\red}=H_{\red}\setminus H^*_{\red}$.

Observe that  
\begin{equation}\label{eq:rh}
	\text{for any (not necessarily distinct) }
	f, f'\in H\cup H_{\red}
	\text{ we have }
	|f\cap f'|\in I\,.
\end{equation}
This is because we can first extend $f$ to an edge $e$ of $H$ with $e\cap f'=f\cap f'$
and proceeding similarly with $f'$ we get an edge $e'\in H$ with $e\cap e'=f\cap f'$,
so that $|f\cap f'|=|e\cap e'|\in I$ follows from the assumption that $H$ be $I$-intersecting.

As a consequence of this observation we learn that for any distinct $f, f'\in H^*_{\red}$
the number~$|f\cap f'|$ is divisible by $t$ and in view of Lemma~\ref{lem:linalgt} it follows
that  
\begin{equation}\label{eq:h7}
	|H^*_{\red}|\le n\,.
\end{equation}

Moreover,~\eqref{eq:rh} reveals that the pair 
$\bigl(\widehat{H}_{\red}, \widehat{H}_{\red}\cup H^*_\red\cup H\bigr)$
is $t$-divisible, which allows us to apply the decomposition lemma 
(Lemma \ref{lem:dl}) to $rt^2$, $t$, $\widehat{H}_{\red}$, and $H^*_\red\cup H$
here in place of~$k$,~$q$, $\F$, and $\G$ there. We thus infer the existence 
of a set system $G$ on $V$ with the following properties:
\begin{enumerate}[label=\alabel]
	\item\label{dl:dc1} $\Delta(G)\le (rt^2)^{2rt^2}$;
	\item\label{dl:dc2} $G$ is an antichain;
	\item\label{dl:dc3} Every edge of $\widehat{H}_{\red}$ is a disjoint union of edges from $G$;
	\item\label{dl:dc4} The pair $(G, G\cup H^*_\red \cup H)$ is $t$-divisible.
\end{enumerate}

We imagine that the edges of $G$ have been coloured {\it green}. 
The green sets of cardinality~$t$ will be referred to as {\it teams}. 
Notice that due to condition~\ref{dl:dc4} the teams are inseparable in~$H$ and,
moreover, by~\ref{dl:dc2} and~\ref{dl:dc4} each team is disjoint to any other green set.

Now we are ready to decompose $V$ and $H$ in the envisioned way. 
We start by defining~$V_T$ to be the union 
of all teams and setting $H_T=H[V_T]$, which guarantees part~\ref{it:VR} of 
Definition~\ref{dfn:family}.

Preparing the definition of $H_S$ we colour a set consisting of $rt(t-1)$ vertices {\it purple} 
if it is the kernel of a $\Delta$-system in $H^*_\red$ of size at least $rt^2$.
Imitating the proof of~\eqref{eq:rh} one checks easily that 
\begin{equation}\label{eq:central}
	\text{ if } Y, Y' \text{ are purple and } f\in H^*_\red\cup H, 
	\text{ then } |Y\cap Y'|, |Y\cap f|\in I\,.
\end{equation}

Now we define $H_S$ to be the collection of all edges $h\in H$ which contain a 
purple set~$Y_h\subseteq h$ such that
$Y_h\cup\{v\}\in H_\red^*$ holds for each~$v\in h\sm Y_h$.
Moreover, we set 
\[
	V_S=\bigcup_{h\in H_S}(h\setminus Y_h)\,,
\]
and contend that 
\begin{equation}\label{eq:STdisj}
	V_S\cap V_T=\varnothing\,.
\end{equation}

Otherwise, there would exist a vertex $v\in V_S\cap V_T$, meaning that there are
an edge $h\in H_S$ with $v\in (h\sm Y_h)$ and a team $g$ with $v\in g$. Applying~\ref{dl:dc4}
to $g\in G$ and $(Y_h\cup\{v\})\in H_\red^*$ we obtain $g\subseteq Y_h\cup\{v\}$. 
By $|h\sm Y_h|=rt\ge 2$ there is a vertex $w\in h\sm Y_h$ distinct from $v$. 
As the set $Y_h\cup\{w\}$ belongs to $H_\red^*$ and intersects $g$ in $t-1$ vertices, 
we get a contradiction to~\ref{dl:dc4}, which proves~\eqref{eq:STdisj}.

Now provided we can show
\begin{equation}\label{eq:VS}
	Y_h\cap V_S=\varnothing 
	\quad \text{ for each } 
	h\in H_S
\end{equation}
it will be clear that $H_S$ is a union of stars with centres $Y_h\subseteq (V\sm V_S)$
and their bodies in~$V_S$, as required by Definition~\ref{dfn:family}\ref{it:VS}.

For the proof of~\eqref{eq:VS} we assume indirectly that for some $h\in H_S$ 
there is a vertex $v\in Y_h\cap V_S$. This means that there exists an edge $h'\in H_S$ 
with $v\in h'\sm Y_{h'}$. But now $|Y_h\cap Y_{h'}|$ and $|Y_h\cap (Y_{h'}\cup\{v\})|$
are two consecutive integers belonging to $I$ by~\eqref{eq:central} and both are at most 
$|Y_h|=rt(t-1)$, contrary to $t\ge 2$. Thereby~\eqref{eq:VS} is proved.    

Next we observe that if for some $f\in H$ and $h\in H_S$ there is a vertex 
$v\in f\cap h\cap V_S$, then the consecutive integers $|f\cap Y_h|$ and 
$|f\cap (Y_h\cup \{v\})|$, again by \eqref{eq:central}, are both in $I$. 

Consequently, 
\begin{equation}\label{eq:Asia's-trick}
	\text{if $f\in H$ and $h\in H_S$ satisfy 
	$f\cap h\cap V_S\neq \emptyset$, then $Y_h\subseteq f$.}
\end{equation}

Hence, all stars in $H_S$ must be semi-disjoint and we may associate with each 
vertex~$v\in V_S$ the set $Y_v\in H^*_\red$ containing $v$ and the centre of 
the star to which $v$ belongs.
With this notation,~\eqref{eq:Asia's-trick} rewrites as 
\begin{equation}\label{eq:55}
	\text{if } f\in H \text{ and } v\in f\cap V_S, \text{ then }
	Y_v\subseteq f\,.
\end{equation}

Condition~\ref{it:VSS} of Definition~\ref{dfn:family} is an immediate consequence of 
this statement and it also follows that $H_S\supseteq \{h\in H\colon |h\cap V_S|=rt\}$.
The reverse inclusion is implied by~\eqref{eq:VS} and thereby condition~\ref{it:VS}
is proved as well.

It remains to establish~\ref{it:VT}, i.e., that for  
\[
	V_R=V\setminus (V_T\cup V_S)
	\qquad \text{ and } \quad 
	H_R= H\setminus (H_T\cup H_{S})
\]
we have 
\begin{equation}\label{eq:HR-goal}
	|H_R|\le |V_T||V_S|n^{rt-3}+(rt^2)^{r^3t^6} |V_R|n^{rt-2}\,.
\end{equation}

The first step in the proof of this result is to split $H_R$ into the two 
subhypergraphs $H_{ST}$ and $\widehat{H}_R$ with the intention of proving 
$|H_{ST}|\le |V_T||V_S|n^{rt-3}$ and 
$|\widehat{H}_R|\le (rt^2)^{r^3t^6} |V_R|n^{rt-2}$.

The family $H_{ST}$ is defined by
\[
	H_{ST}=\bigl\{h\in H_R\colon \text{there are } v\in h\cap V_S 
	\text{ and a team } g\subseteq h
	\text{ with } Y_v\cap g=\varnothing\bigr\}\,.
\]

Observe that if $h\in H_{ST}$ and $v$, $g$ are as in the above definition,
then $Y_v\subseteq h$ follows from~\eqref{eq:55} and we have 
$|h\sm(Y_v\cup g)|=rt^2-1-t-rt(t-1)\le rt-3$. As there are at most 
$|V_S|$ possibilities for $v$, $|V_T|$ possibilities for $g$, 
and $n^{rt-3}$ possibilities for the set $h\sm(Y_v\cup g)$, it follows that
we have indeed 
\[
	|H_{ST}|\le |V_T||V_S|n^{rt-3}\,.
\]

Thus to conclude the argument we need to show that the hypergraph 
$\widehat{H}_R=H_R\setminus H_{ST}$ satisfies 
\begin{equation}\label{eq:HR-hut}
	|\widehat{H}_R|\le(rt^2)^{r^3t^6} |V_R|n^{rt-2}\,.
\end{equation}

In the special case $V_R=\varnothing$ this can only be true if $\widehat{H}_R=\varnothing$
holds as well. For that reason it will certainly help us to establish
\begin{equation}\label{eq:needed?}
	H_R\setminus V_R\subseteq H_{ST}\,.
\end{equation}
To verify this, consider any edge $f\in H_R$ not meeting $V_R$. 
Owing to $f\not\in H_T$ there must exist a vertex $v\in f\cap V_S$ 
and~\eqref{eq:55} tells us that $Y_v\subseteq f$. 
Now $f\sm Y_v$ cannot be a subset of~$V_S$ because~\eqref{eq:55} would then yield $f\in H_S$.
Together with $f\subseteq (V_S\cup V_T)$ this shows that there must be a vertex
$x\in V_T\cap (f\sm Y_v)$. This vertex must in turn belong to some team~$g\in G$,
which is in fact a subset of $f\sm Y_v$. Thereby~\eqref{eq:needed?} is proved. 

Due to the discussion preceeding~\eqref{eq:needed?} we may henceforth suppose that 
$V_R\ne\varnothing$. Now the idea for proving~\eqref{eq:HR-hut} is that we can mark 
in every edge $h\in \widehat{H}_R$ at least one vertex from $h\cap V_R$ in such a way 
that every vertex in $V_R$ gets marked 
at most~$(rt^2)^{r^3t^6}n^{rt-2}$ many times. The marking procedure we use depends on 
the red and green sets contained in~$h$ and thus it involves several case distinctions. 

In view of property~\ref{dl:dc3} of the green sets, we may write 
\begin{equation} \label{eq:HR-123}
	\widehat{H}_R=\widehat{H}^1_R\cup \widehat{H}^2_R\cup \widehat{H}^3_R
\end{equation} 
with
\begin{align*}
	\widehat{H}^1_R &=\bigl\{h\in \widehat{H}_R\colon h 
		\text{ cannot be written as a union of red and green sets}\bigr\}\,, \\
	\widehat{H}^2_R &=\bigl\{h\in \widehat{H}_R\colon h 
		\text{ is the union of its green subsets}\bigr\}\,, \\
	\text{ and } \quad \widehat{H}^3_R &=\bigl\{h\in \widehat{H}_R\colon 
		h\not\in \widehat{H}^1_R \text{ and there is some } f\in H^*_\red 
		\text{ with } f\subseteq h\bigr\}\,. 
\end{align*}

Regarding the first of these three hypergraphs, we note that if $h\in \widehat{H}^1_R$
and $v\in h$ is not contained in any red or green subset of $h$, then $v\in V_T$ is impossible
due to the inseparability of the teams, $v\in V_S$ is impossible by~\eqref{eq:55},
and hence we must have $v\in V_R$. In other words, if we set
\[
	H_v=\bigl\{h\in\widehat{H}_R\colon v\in h\bigr\}
\]
and 
\[
	\widehat{H}^1(v) =\bigl\{h\in H_v\colon \text{there is no } f\in H_\red \text{ with } 
				v\in f\subseteq h\bigr\}
\]
for every $v\in V_R$, then
\begin{equation}\label{eq:H1}
	\widehat{H}^1_R\subseteq\bigcup_{v\in V_R}\widehat{H}^1(v)\,.
\end{equation}

According to our plan the hypergraphs $\widehat{H}^1(v)$ should be of size at most 
$O(n^{rt-2})$ and this is indeed what we prove next. 
 
\begin{fact}\label{fact:h1}
	For every $v\in V_R$ we have 
		\[
		\big|\widehat{H}^1(v)\big|\le (rt^2)^{2rt^2}n^{rt-2}\,.
	\]
	\end{fact}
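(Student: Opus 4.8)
The goal is to bound $\big|\widehat{H}^1(v)\big|$, the number of edges $h\in\widehat H_R$ through a fixed $v\in V_R$ such that no red set contains $v$ inside $h$. The plan is to exploit the interplay between the red-colouring rule and the sunflower lemma. Fix $v\in V_R$ and suppose for contradiction that $\widehat{H}^1(v)$ has more than $(rt^2)^{2rt^2}n^{rt-2}$ edges. Each such edge $h$ has $rt^2$ vertices, one of which is $v$, so $h\sm\{v\}$ has size $rt^2-1$. I would first argue that we may restrict attention to the sets $h\sm\{v\}$ and apply Theorem~\ref{thm:sunflower}: since $(rt^2)^{2rt^2}n^{rt-2}$ comfortably exceeds $(rt^2-1)!\,(rt^2)^{rt^2}$ once we account for the $n^{rt-2}$ factor only giving us room to iterate, the right move is to find a large sunflower among these truncated edges whose kernel is small.

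**The key step.** More precisely, consider the $(rt^2-1)$-sets $\{h\sm\{v\}\colon h\in\widehat H^1(v)\}$. If there are more than $(rt^2)!\,(rt^2)^{rt^2}$ of them — which holds for $n$ large — Theorem~\ref{thm:sunflower} yields a $\Delta$-system of size more than $rt^2$ with some kernel $K$. Re-attaching $v$, the sets $h=\{v\}\cup(h\sm\{v\})$ with $h\sm\{v\}$ in this sunflower form a $\Delta$-system in $H$ with kernel $\{v\}\cup K$. Now split by the size of $K$: either $|\{v\}\cup K|\le rt(t-1)+1$, in which case $\{v\}\cup K$ is by definition \emph{red} (it is a kernel of a sunflower of at least $rt^2$ edges of $H$) and contains $v$ and lies inside each such $h$ — directly contradicting the defining property of $\widehat H^1(v)$; or $|K|\ge rt(t-1)+1$, which forces $|h\sm\{v\}|=rt^2-1\ge|K|\ge rt(t-1)+1$, i.e. $rt^2-rt(t-1)\ge 2$, always true, so this case is genuinely possible and must be handled by a counting/iteration argument rather than a single sunflower. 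The cleaner route is therefore to \emph{iterate}: whenever the relevant family is too large, pull out a sunflower; its kernel, if small enough, is red and kills the edges; if the kernel is large, it still drastically cuts down the count, and after boundedly many rounds (at most $rt^2$, controlled by kernel sizes) we reach the claimed bound. The exponent $2rt^2$ on $rt^2$ in the statement is exactly what the sunflower threshold $b!a^{b+1}$ with $a=b=rt^2$ produces, so the arithmetic will close.

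**The main obstacle.** The delicate point is the large-kernel case: when the sunflower kernel $K$ already has size exceeding $rt(t-1)+1$, it is \emph{not} eligible to be coloured red, so one cannot immediately conclude. The remedy I anticipate is to observe that the structure already developed — specifically~\eqref{eq:55} and the inseparability of teams — severely constrains how an edge of $\widehat H_R$ through $v\in V_R$ can meet $V_S$ and $V_T$: every vertex of $h$ lying in $V_S$ drags in its whole $Y$-set, every vertex in $V_T$ drags in its team, and $v$ itself is unmarked. So the ``free'' part of $h$ not forced by red/green subsets is small, and in the situation of $\widehat H^1(v)$ the hypothesis that $v$ sits in no red subset of $h$ means $h$ cannot be padded by a red set through $v$; combined with the sunflower argument on $h\sm\{v\}$ this should force the kernel to be small after all, or else force a forbidden intersection. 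I would spell out that case analysis carefully — that is where the real work of the Fact lies — and then the final counting is routine: each of the boundedly many sunflower-kernels is red, hence contains $v$, hence lies in every $h$ it generated, contradiction; so the family was small from the start, giving $\big|\widehat H^1(v)\big|\le(rt^2)^{2rt^2}n^{rt-2}$ as claimed.
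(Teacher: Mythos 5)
You have located the real difficulty yourself, but you have not closed it, and the way you propose to close it would not work. Applying Theorem~\ref{thm:sunflower} to the sets $h\sm\{v\}$ gives a $\Delta$-system whose kernel $K$ you cannot control. Your small-kernel case is fine: if $|\{v\}\cup K|\le rt(t-1)+1$ then $\{v\}\cup K$ is red and sits inside edges of $\widehat{H}^1(v)$ through $v$, a contradiction. But when $|\{v\}\cup K|>rt(t-1)+1$ the set is simply not eligible to be coloured red, and, contrary to your hope, nothing forbidden happens: the edges of such a sunflower intersect pairwise in exactly $\{v\}\cup K$, a set of size at least $rt(t-1)$, which is permitted by the definition of $I$, so $H\in\irt$ tolerates arbitrarily many such configurations; neither \eqref{eq:55} nor the inseparability of teams says anything about how many edges may share a fixed large subset through $v$. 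The fallback ``iterate, each round drastically cuts the count, at most $rt^2$ rounds'' is unsubstantiated: one application of the sunflower lemma with $a=rt^2$ yields only about $rt^2$ petals, a fixed large kernel accounts for at most $n^{rt-2}$ edges, and the number of possible large kernels is not bounded by a constant, so no termination within the claimed budget follows from what you wrote. This is a genuine gap, since the large-kernel regime is exactly where the work of the Fact lies.

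The paper never invokes Theorem~\ref{thm:sunflower} here and sidesteps the issue as follows. One takes a \emph{maximal} set $x\ni v$ such that more than $(rt^2)^{2(rt^2-|x|)}n^{rt-2}$ edges of $\widehat{H}^1(v)$ contain $x$ (with $x=\{v\}$ as the starting example). Since the number of edges containing $x$ is at most $n^{rt^2-|x|}$, this $|x|$-dependent threshold forces $|x|\le rt(t-1)+1$ automatically, so the large-kernel regime never arises. Because $x$ contains $v$ and lies in an edge of $\widehat{H}^1(v)$, it cannot be red, hence a maximal $\Delta$-system inside $\widehat{H}^1(v)$ with kernel $x$ has fewer than $rt^2$ members; the union $B$ of its petals has fewer than $(rt^2)^2$ vertices and meets every edge of $\widehat{H}^1(v)$ containing $x$, and averaging over $B$ produces a vertex $w$ for which $x\cup\{w\}$ again exceeds the corresponding threshold, contradicting maximality. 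The exponent $2rt^2$ comes from this loss of a factor $(rt^2)^2$ per step over at most $rt^2$ steps, not from the Erd\H{o}s--Rado bound $b!a^{b+1}$. If you wish to keep your framing, you must add a mechanism of this kind that caps the kernel size \emph{before} redness is invoked.
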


\begin{proof}
	Assume for the sake of contradiction that $v\in V_R$ violates this claim.
	Then $x=\{v\}$ is an example of a subset of $V$ with $v\in x$ and 
		\begin{equation}\label{eq:h1-x}
		\big|\bigl\{h\in \widehat{H}^1(v)\colon x\subseteq h\bigr\}\big|
		> (rt^2)^{2(rt^2-|x|)}n^{rt-2}\,.
	\end{equation}
	Now let $x\subseteq V$ be a maximal set of vertices with $v\in x$ that 
	satisfies~\eqref{eq:h1-x}. As $x\subseteq h$ for some $h\in H$, 
	we must have $|x|\le rt^2$. Thus 
		\[
		n^{rt-2}\le (rt^2)^{2(rt^2-|x|)}n^{rt-2} <
	  	\big|\bigl\{h\in \widehat{H}^1(v)\colon x\subseteq h\bigr\}\big|
	  	\le \binom{n-|x|}{rt^2-|x|}\le n^{rt^2-|x|}
	\]
		and it follows that $|x|\le rt(t-1)+1$. But owing to the definition of 
	$\widehat{H}^1(v)$ it is not possible for $x$ to be red. This means, in particular, that
	there is a maximal $\Delta$-system~$\cG\subseteq \widehat{H}^1(v)$ with kernel~$x$
	and $|\mathcal{G}|< rt^2$. The size of the set
	$B=\bigcup_{h\in \mathcal{G}}(h\setminus x)$ can be bounded by 
	$|B|\le \sum_{h\in \mathcal{G}}|h|< (rt^2)^2$ and
	the maximality of $\cG$ implies that every edge $h\in \widehat{H}^1(v)$ 
	with~$x\subseteq h$ intersects $B$. So by averaging and~\eqref{eq:h1-x} there 
	exists a vertex $w\in B$ with
	\[
		\big|\bigl\{h\in \widehat{H}^1(v)\colon (x\cup\{w\})\subseteq h\bigr\}\big|
		> \frac{(rt^2)^{2(rt^2-|x|)}}{(rt^2)^2}n^{rt-2}
		= (rt^2)^{2(rt^2-|x\cup\{w\}|)} n^{rt-2}\,.
	\]
		
	This inequality tells us that $x\cup\{w\}$ contradicts the maximality of $x$. 
	Thereby Fact~\ref{fact:h1} is proved.
\end{proof}

This completes our analysis of $\widehat{H}^1_R$ and we proceed with $\widehat{H}^2_R$.
To this end, we shall use the trivial decomposition  
\begin{equation}\label{eq:H2}
	\widehat{H}^2_R=\bigcup_{v\in V_R}\widehat{H}^2(v)\,,
\end{equation}
where
\[
	\widehat{H}^2(v)=\bigl\{h\in H_v: \text{$h$ is the union of its green subsets}\bigr\}\,.	
\]

\begin{fact}\label{fact:h2}
	If $v\in V_R$, then
		\[
		\big|\widehat{H}^2(v)\big|< (rt^2)^{r^3t^6-7rt^2}n^{rt-2}\,.
	\]
	\end{fact}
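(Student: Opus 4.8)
The plan is to mimic the averaging/sunflower argument used for Fact~\ref{fact:h1}, but to extract a stronger contradiction from the combinatorial structure of green sets. Fix $v\in V_R$ and suppose, for contradiction, that $|\widehat H^2(v)|\ge (rt^2)^{r^3t^6-7rt^2}n^{rt-2}$. Since $v$ lies in every $h\in\widehat H^2(v)$ and every such $h$ is a union of its green subsets, there is a green set $g_0\subseteq h$ with $v\in g_0$; as $v\in V_R$, this $g_0$ is not a team, so $|g_0|\ge t+1$ (recall teams are exactly the green $t$-sets, and by \ref{dl:dc2} green sets are incomparable, so $g_0$ has at least $t+1$ vertices). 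First I would run the standard maximal-set iteration: choose a set $x\subseteq V$ with $v\in x$, maximal subject to $|\{h\in\widehat H^2(v)\colon x\subseteq h\}|> (rt^2)^{2(rt^2-|x|)}n^{rt-2}$ (the singleton $\{v\}$ works for small enough threshold, once one checks the constants match up with $r^3t^6-7rt^2$). As in Fact~\ref{fact:h1}, $|x|\le rt^2$ forces the bound $n^{rt-2}< n^{rt^2-|x|}$, hence $|x|\le rt(t-1)+1$; but then the edges through $x$ in $\widehat H^2(v)$ either contain a red $\Delta$-system kernel extending $x$ — which would make $x$ itself red (contradicting $v\in V_R$ much as in Fact~\ref{fact:h1}, since a red set containing $v$ is forbidden by the definition of $\widehat H_R$ via \eqref{eq:55}) — or a bounded-size "blocker" $B$ exists and averaging produces $x\cup\{w\}$ violating maximality.

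The subtlety that makes this harder than Fact~\ref{fact:h1}, and the step I expect to be the main obstacle, is that in $\widehat H^2(v)$ we are told $h$ is a union of \emph{green} subsets, so the natural maximal-set $x$ we produce should itself be forced to be green (or to contain $v$ inside a green set), and green sets that are not teams cannot be too small; combined with the fact that $h\notin H_{ST}$ and $h\notin H_S$, this has to be leveraged to get a better exponent than the crude $(rt^2)^{2rt^2}$ of Fact~\ref{fact:h1}. Concretely, I would argue that because $h$ is covered by green sets and $\Delta(G)\le (rt^2)^{2rt^2}$, while $h$ has $rt^2$ vertices, $h$ contains at most $rt^2$ green sets, so it contains some green set $g$ with $v\in g$ and $|g|\ge t+1$; one then replaces the role of "red kernel" in Fact~\ref{fact:h1} by "green set through $v$". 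The key point is that $x$, once maximal, either \emph{equals} such a $g$ up to lower-order slack, which forces $|x|\ge t+1$ and hence strengthens the bound $|x|\le rt^2$ to a genuine range $t+1\le|x|\le rt^2$; this narrower range, fed back through $n^{rt-2}\le(rt^2)^{2(rt^2-|x|)}n^{rt-2}\le n^{rt^2-|x|}$, together with the fact that no red or purple set can absorb $v$ (by the definition of $\widehat H_R$ and \eqref{eq:55}), is what yields the sharper exponent. I would need to count carefully how many edges $h\in\widehat H^2(v)$ of a fixed "green type" (i.e.\ with a prescribed list of green sets covering it) there can be: fixing the $\le rt^2$ green sets covering $h$ determines $h$ up to the choice of which vertices outside $g\ni v$ are free, and since $h\notin H_{ST}$ forbids a team disjoint from the relevant $Y$-set, one gains an extra factor of $n$ beyond the naive count — this is where the improvement from $2rt^2$ to $r^3t^6-7rt^2$ in the exponent comes from, and nailing down this counting is the crux.

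The remaining steps are routine bookkeeping. Having shown $|\widehat H^2(v)|<(rt^2)^{r^3t^6-7rt^2}n^{rt-2}$ for each of the $\le|V_R|$ choices of $v$, one combines this with the decomposition \eqref{eq:H2} to get $|\widehat H^2_R|<(rt^2)^{r^3t^6-7rt^2}|V_R|n^{rt-2}$, which is comfortably within the budget of \eqref{eq:HR-hut} once the analogous bounds for $\widehat H^1_R$ (Fact~\ref{fact:h1}) and $\widehat H^3_R$ are added, since $3\,(rt^2)^{2rt^2}\le(rt^2)^{r^3t^6}$ with room to spare for $r\ge 1$, $t\ge 2$. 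The one thing to be careful about is that the threshold in the maximal-set iteration must be chosen so that the base case ($x=\{v\}$) is exactly the hypothesis we are contradicting, and so that each averaging step divides the count by at most $|B|\le(rt^2)^2$ and hence loses at most a factor $(rt^2)^2=(rt^2)^{2(|x\cup\{w\}|-|x|)}$, keeping the induction invariant intact — the same mechanism as in Fact~\ref{fact:h1}, just with the improved starting exponent.
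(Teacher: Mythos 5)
There is a genuine gap, and it lies exactly where you locate ``the crux.'' Your plan transplants the maximal-set/sunflower iteration from Fact~\ref{fact:h1}, but the engine of that argument has no analogue here. In Fact~\ref{fact:h1} the iteration only moves forward because the current set $x$ (with $v\in x\subseteq h$ for many $h\in\widehat H^1(v)$) \emph{cannot} be red -- that is forced by the very definition of $\widehat H^1(v)$ -- and non-redness is what caps the maximal $\Delta$-system with kernel $x$ at fewer than $rt^2$ petals, producing the small blocker $B$. For $\widehat H^2(v)$ there is no such prohibition: the definition of $\widehat H_R$ and~\eqref{eq:55} do \emph{not} forbid a red set containing a vertex of $V_R$ (membership in $V_R$ only says $v$ lies in no team and not in $V_S$), so your claimed contradiction ``$x$ red contradicts $v\in V_R$'' is false, and if $x$ happens to be red the iteration simply stalls with no conclusion. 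Moreover, the fallback you sketch cannot reach the stated bound: knowing only that $x$ contains $v$, sits inside a green set, and has size at least $t+1$ caps the number of remaining vertices of $h$ at $rt^2-t-1$, giving $n^{rt^2-t-1}$, which is far above the target $n^{rt-2}$ for every $r\ge1$, $t\ge2$; the extra ``factor of $n$ from $h\notin H_{ST}$'' is never substantiated and cannot close a gap of this size. (You also have the constants backwards: $(rt^2)^{r^3t^6-7rt^2}$ is much \emph{larger} than $(rt^2)^{2rt^2}$, so no sharpening of the multiplicative constant is needed -- the whole difficulty is the power of $n$.)

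The idea the paper actually uses, and which is absent from your sketch, is global rather than local: since every $h\in\widehat H^2(v)$ is the union of its green subsets, decompose $h$ into the connected components of $G|h$ and count those. Each component lies in the auxiliary family $\cG$ of connected green-generated sets, whose maximum degree is at most $(\Delta(G)rt^2)^{rt^2}$ by~\ref{dl:dc1}; by~\ref{dl:dc4} every green set, hence every component, has at least $t$ vertices, and a component of size exactly $t$ is a team; since $v\in V_R$ lies in no team, not all components can be teams, so $h$ has at most $rt-1$ components, i.e.\ at most $rt-2$ components besides the one containing $v$. Choosing one representative vertex in each of those components (at most $n^{rt-2}$ ways) and then the component itself (at most $\Delta(\cG)$ ways each) determines $h$, giving $|\widehat H^2(v)|\le \Delta(\cG)^{rt-1}n^{rt-2}$, which is where both the power $n^{rt-2}$ and the large constant $(rt^2)^{r^3t^6-7rt^2}$ come from. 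Without this component-counting step (or an equivalent mechanism exploiting that the \emph{entire} edge is covered by few large green pieces), your proposal does not prove the fact.
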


\begin{proof}
	Consider the auxiliary set system 
		\[
		\cG=\bigl\{x\subseteq V\colon 2\le |x|\le rt^2 \text{ and } 
		G|x \text{ is connected}\bigr\}\,. 
	\]
		
	Utilising property~\ref{dl:dc1} of the green sets and the fact that for every
	$x\in\cG$ there is a spanning sub-setsystem of $G|x$ consisting of at most $|x|$
	sets we obtain 
	\[
		\Delta (\cG)<(\Delta(G)rt^2)^{rt^2}
		\le (rt^2)^{2(rt^2)^2+rt^2}\,.
	\]
	
	[Why? Fix $v\in V$ and look at an arbitrary edge $x\in\G$ with $v\in x$. Due 
	to the connectedness of $G|x$ there exist $g_1, \ldots, g_\ell\in G|x$ with $v\in g_1$, 
	$(g_1\cup \ldots\cup g_{i-1})\cap g_i\ne \varnothing$ for~$i\in [2, \ell]$, and 
	$g_1\cup\ldots\cup g_\ell=x$.
	There are at most $rt^2$ possibilities for $\ell$, $\Delta(G)$ possibilities for $g_1$,
	and for every $i\in [2, \ell]$ there are at most $|g_1\cup \ldots\cup g_{i-1}|\Delta(G)$
	possibilities for $g_i$, which is at most $\Delta(G)rt^2$.]
	As every edge of $G$ has at least $t$ vertices, the same is true about $\cG$. 
	Moreover, the only possibility for $x\in\cG$ to have size exactly $t$ is that
	it is a team. 
	 
	Now any given $h\in \widehat{H}^2(v)$ can be expressed as a disjoint union of 
	edges of $\cG$ by looking at the connected components of $G|h$. 
	The number of edges from $\cG$ appearing in such a decomposition can be at most $rt-1$
	because of the remarks from the previous paragraph and as $v$ cannot belong to a team.
	
	Representing each edge $h\in \widehat{H}^2(v)$ by a selection of one vertex from
	each of its at most~$rt-2$ green components not containing $v$, we learn that indeed  
		\[
		\big|\widehat{H}^2(v)\big|\le \Delta(\cG)^{rt-1}n^{rt-2}
		<(rt^2)^{r^3t^6-7rt^2}n^{rt-2}\,. \qedhere
	\]
	\end{proof}
	
	It remains to deal with the hypergraph $\widehat{H}^3_R$, which may be further decomposed 
	as
		\begin{equation}\label{eq:H3}
	\widehat{H}^3_R=\widehat{H}^{3, \mathrm{x}}_R\cup \widehat{H}^{3, \mathrm{y}}_R\,,
	\end{equation}  	
		where 
		\[
	\widehat{H}^{3, \mathrm{x}}_R=\bigl\{h\in \widehat{H}^3_R\colon
	h \text{ is the union of its subsets belonging to }H^*_\red \bigr\}
	\]
		and $\widehat{H}^{3, \mathrm{y}}_R=\widehat{H}^3_R\sm \widehat{H}^{3, \mathrm{x}}_R$.
	We will estimate the sizes of these two hypergraphs in the two facts that follow.
	In both proofs we will frequently use the inequality~$|H^*_{\red}|\le n$ obtained 
	in~\eqref{eq:h7} above without referencing it.
	
	\begin{fact}\label{fact:h3}
		We have $\big|\widehat{H}^{3, \mathrm{x}}_R\big|\le (rt^2)^{rt^2}|V_R|n^{rt-2}$.
	\end{fact}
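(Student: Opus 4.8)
Since, by \eqref{eq:needed?}, every edge of $\widehat{H}_R$ meets $V_R$, the same holds for every $h\in\widehat{H}^{3,\mathrm{x}}_R$, and it suffices to bound, for each fixed $v\in V_R$, the number of such edges through $v$, then sum over $v$. Fix $v\in V_R$ and $h\in\widehat{H}^{3,\mathrm{x}}_R$ with $v\in h$. Since the sets of $H^*_\red$ contained in $h$ cover $h$, one may pick one of them, $f\subseteq h$, and — since $v\in h$ is covered by a red subset but the red subsets need not all contain $v$ — try to pick $f$ with $v\notin f$. If this is possible then, because $|f|=rt(t-1)+1$ and $|h|=rt^2$, only $rt-2$ vertices of $h$ beyond $f\cup\{v\}$ remain free; so such $h$ are charged to a triple (a set in $H^*_\red$, a vertex of $V_R$, an $(rt-2)$-set), giving a contribution of the shape $|H^*_\red|\cdot|V_R|\cdot n^{rt-2}$ by \eqref{eq:h7}. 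The whole difficulty is to improve $|H^*_\red|\le n$ to a bounded factor here, i.e.\ to gain an extra $n$.

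The gain comes from the rigidity of the red subsets of $h$: any two distinct $f_1,f_2\in H^*_\red$ with $f_1,f_2\subseteq h$ satisfy $|f_1\cap f_2|\ge|f_1|+|f_2|-|h|=rt(t-2)+2$, so they differ in at most $rt-1$ vertices. Consequently, since $h$ is covered by its red subsets, each vertex of $h\setminus f$ lies in a near-copy $f'$ of $f$; the bounded data $f'\cap f\subseteq f$ then determines $f'$ up to its at most $rt-1$ vertices outside $f$, and in the extremal case $|f'\cap f|=rt(t-1)$ the one vertex of $f'\setminus f$ is a red extension of the $rt(t-1)$-set $f\setminus\{a\}$ for some $a\in f$. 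Provided $f$ has no purple $rt(t-1)$-subset, each such $f\setminus\{a\}$ has fewer than $rt^2$ red extensions, so the $rt-1$ vertices of $h\setminus f$ are confined to a set of size bounded in $r,t$; this pins $h$ down to $O_{r,t}(1)$ edges per $f$. It then remains to bound the number of red subsets $f$ relevant in this way by $O_{r,t}(|V_R|n^{rt-2})$, which is where one again invokes that $h\cap V_R\ne\varnothing$, the bound $\Delta(G)\le(rt^2)^{2rt^2}$ on green degrees, and the decomposition lemma — e.g.\ to rule out spurious small kernels, using that small red sets are disjoint unions of green sets and every green set has at least $t$ vertices, so cannot be a singleton.

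The main obstacle is the case where we are forced to charge $h$ to a red subset $f=Y\cup\{w\}$ carrying a purple $rt(t-1)$-subset $Y$, for then the confinement above fails. Since $h\notin H_S$, some $u\in h\setminus Y$ has $Y\cup\{u\}\notin H^*_\red$; by the rigidity estimate a red subset of $h$ through $u$ must avoid containing $Y$ yet overlap $Y$ in at least $rt(t-2)+1$ vertices. I expect the right way to finish is to combine this with $h\notin H_{ST}$ — and, when $Y$ is a team or a star centre so that some $Y_u\subseteq h$ by \eqref{eq:55}, with the properties of the sets $Y_u$ and of teams as inseparable sets — to either contradict $h\in\widehat{H}^{3,\mathrm{x}}_R$ or recover the missing factor of $n$ by a direct count. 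Once this case is handled, adding its contribution to that of the purple-free case and summing over $v\in V_R$ gives $|\widehat{H}^{3,\mathrm{x}}_R|\le(rt^2)^{rt^2}|V_R|n^{rt-2}$, as claimed.
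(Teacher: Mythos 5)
Your proposal contains a genuine, and indeed self-acknowledged, gap: the case in which the red subset $f\subseteq h$ you charge to contains a purple $rt(t-1)$-set is never resolved (``I expect the right way to finish is \dots''), and this is not a minor loose end but the heart of the matter. The obstacle arises because you anchor the count at an arbitrary red subset of $h$. The paper instead first establishes a dichotomy using \eqref{eq:rh}: either $h$ contains two red subsets $f,f'$ with $|f\cap f'|\le rt(t-1)-t$, in which case one simply counts pairs from $H^*_\red$ (at most $n^2$ by \eqref{eq:h7}) times the at most $n^{rt-t-2}\le n^{rt-4}$ choices for the remaining vertices of $h$, contributing at most $n^{rt-2}$; or all red subsets of $h$ pairwise intersect in exactly $rt(t-1)$, and since they cover $h$ this forces a single $rt(t-1)$-set $Y_h$ with $Y_h\cup\{v\}\in H^*_\red$ for \emph{every} $v\in h\setminus Y_h$. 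In that second case $Y_h$ cannot be purple, for otherwise $h$ would satisfy the defining property of $H_S$ --- so the purple obstruction you struggle with never occurs; one then counts at most $rt^2n$ candidates for $Y_h$ (a red set minus a vertex), each lying in at most $\binom{rt^2}{rt}$ such edges, since more would make it purple. Your sketch also does not actually bound the ``non-extremal'' situation $|f'\cap f|\le rt(t-1)-t$ (possible when $r\ge 2$): saying that $f'\cap f$ ``determines $f'$ up to its at most $rt-1$ vertices outside $f$'' gives no counting gain, whereas the pair-counting above does.

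A further quantitative problem is that, even in your purple-free case, your count is indexed by the red sets $f$, of which there may be up to $n$, so you obtain $O_{r,t}(n)$ edges. For $rt\ge 3$ this is harmless because $V_R\ne\varnothing$ gives $n\le |V_R|n^{rt-2}$, but for $r=1$, $t=2$ the target is $(rt^2)^{rt^2}|V_R|n^{rt-2}=O(|V_R|)$, which can be far smaller than $n$. Here one genuinely needs a count charged to vertices of $V_R$; the paper does this with a separate marking argument (using \eqref{eq:rh} to show each vertex of $V_R$ is marked at most $9$ times), and nothing in your outline, despite its opening ``fix $v\in V_R$'' framing, delivers such a per-vertex bound. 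So the proposal identifies some of the right ingredients (rigidity of red subsets inside an edge, non-purple cores having fewer than $rt^2$ red extensions), but it does not constitute a proof.
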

	
	\begin{proof}
		In the light of~\eqref{eq:rh} there are only two possibilities for an edge $h\in \widehat{H}^{3, \mathrm{x}}_R$. Either
		\begin{enumerate}[label=\rmlabel]
			\item\label{it:x1} there are $f,f'\subseteq h$ in $H^*_\red$ such that 
			$|f\cap f'|\le rt(t-1)-t$,
			\item\label{it:x2} or there is some $Y_h\subseteq h$ of size $rt(t-1)$ such that 
			$Y_h\cup\{v\}\in H^*_\red$ holds for every vertex $v\in h\sm Y_h$.
		\end{enumerate}
		
		If $h$ is of type~\ref{it:x1} we have $|f\cup f'|=|f|+|f'|-|f\cap f'|\ge rt^2-rt+t+2$ 
		and hence $|h-(f\cup f')|\le rt-t-2\le rt-4$. As there are at most $n^2$
		possibilities to choose a pair $f, f'$ of two edges from $H^*_\red$ and at most 
		$n^{rt-4}$ possibilities to choose at most $rt-4$ further vertices in $V$, 
		there can be at most $n^{rt-2}$ edges in $\widehat{H}^{3, \mathrm{x}}_R$ 
		to which the description~\ref{it:x1} applies.
		
		Next we note that if $h$ and $Y_h$ are as in~\ref{it:x2}, then $Y_h$ cannot be 
		purple for otherwise $h$ would satisfy the requirements for belonging to $H_S$.
		We will prove below that there are at most $9|V_R|$ such edges in the 
		special case $r=1$ and $t=2$, and at most $\binom{rt^2}{rt}\cdot rt^2n$
		such edges if $rt\ge 3$. Due to $V_R\ne \varnothing$ this suffices to establish 
		Fact~\ref{fact:h3} in both cases.
		
		Let us consider the case that $r=1$ and $t=2$ first. If $h$ denotes an edge 
		of type~\ref{it:x2}, then $h\sm Y_h\subseteq V_R$ by~\eqref{eq:55} and~\ref{dl:dc4},
		and we may mark any vertex $v\in h\sm Y_h$. 
		Since the triples $Y_h\cup\{v\}$ and $h\sm \{v\}$ are both in 
		$H^*_\red$,~\eqref{eq:rh} implies that $v$ is contained in at most~$3$ red 
		sets $f\in H^*_\red$. For none of them $f\setminus \{v\}$ is purple
		(because $v\not\in V_S$), which in turn means that each of them 
		can be involved at most 3 times in the marking of $v$. 
		Altogether each~$v\in V_R$ gets marked at most $9$ times due to edges of 
		type~\ref{it:x2}, wherefore there are indeed at most~$9|V_R|$ such edges.
		
		Now suppose that $rt\ge 3$ and let $h$ again denote an edge of type~\ref{it:x2}. 
		As $Y_h$ arises from a member of $H^*_\red$ by the deletion of a vertex, 
		there are at most $rt^2n$ candidates for this set and each of them can be used 
		in at most $\binom{rt^2}{rt}$ edges of type~\ref{it:x2}, for otherwise it would 
		be purple. This proves the upper bound of $\binom{rt^2}{rt}\cdot rt^2n$
		on the number of such edges $h$ and the proof of Fact~\ref{fact:h3}
		is complete.
\end{proof}
	
\begin{fact}\label{fact:h4}
	We have $\big|\widehat{H}^{3, \mathrm{y}}_R\big|\le \bigl((rt^2)^{2rt^2} +1\bigr)|V_R| n^{rt-2}$.
\end{fact}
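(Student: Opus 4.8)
The plan is to bound $\widehat{H}^{3, \mathrm{y}}_R$ by a marking argument analogous to those used for $\widehat{H}^1_R$ and $\widehat{H}^2_R$: I will show that every edge $h\in\widehat{H}^{3, \mathrm{y}}_R$ contains at least one vertex of $V_R$ which can be marked, in such a way that no vertex of $V_R$ is marked more than $\bigl((rt^2)^{2rt^2}+1\bigr)n^{rt-2}$ times. First I would observe that, by definition, such an $h$ contains some $f\in H^*_\red$ with $f\subseteq h$, but $h$ is \emph{not} a union of its members from $H^*_\red$; hence there is a vertex $w\in h$ lying in no subset of $h$ belonging to $H^*_\red$. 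I would then argue that $w$ must lie in $V_R$: indeed $w\notin V_T$ because otherwise $w$ lies in a team $g\subseteq h$, and combined with $f\subseteq h$ the $t$-divisibility of $(G, G\cup H^*_\red\cup H)$ would let us grow a red set through $w$ inside $h$ using $f$ as a ``base'' (more precisely one can produce a member of $H^*_\red$ inside $h$ containing $w$ by the sunflower construction underlying redness and~\eqref{eq:rh}); and $w\notin V_S$ by~\eqref{eq:55}, since $w\in V_S$ would force $Y_w\subseteq h$ and $Y_w\cup\{w\}\in H^*_\red$, contradicting the choice of $w$. So every $h\in\widehat{H}^{3, \mathrm{y}}_R$ has a designated vertex $w(h)\in h\cap V_R$, and it remains to show that few edges of $\widehat{H}^{3, \mathrm{y}}_R$ map to any given $v\in V_R$ under $h\mapsto w(h)$.

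For the multiplicity bound, fix $v\in V_R$ and consider the subfamily of edges $h\in\widehat{H}^{3, \mathrm{y}}_R$ with $w(h)=v$; each such $h$ contains some $f_h\in H^*_\red$ with $f_h\subseteq h$, and crucially $v\notin f_h$ (since $v=w(h)$ lies in no red subset of $h$). The idea is to split according to $f_h$: there are at most $|H^*_\red|\le n$ choices of red set $f$, and for a fixed such $f$ I would bound the number of edges $h\supseteq f\cup\{v\}$ in $\widehat{H}^{3, \mathrm{y}}_R$ with $w(h)=v$. Since $|f\cup\{v\}|=rt(t-1)+2$, there remain $rt^2-rt(t-1)-2=rt-2$ further vertices in such an $h$, giving a crude count of $n^{rt-2}$ extensions; this already yields roughly $n\cdot n^{rt-2}=n^{rt-1}$ edges per vertex, which is \emph{too weak} by a factor of $n$. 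So the real work is a second-order sunflower/maximality argument: I would show that for a fixed $v$ one may restrict to red sets $f$ lying in a \emph{bounded} family, by iterating the ``grow a maximal $\Delta$-system and average'' technique exactly as in the proof of Fact~\ref{fact:h1}. Concretely, run the argument of Fact~\ref{fact:h1} with the family $\{h\in\widehat{H}^{3, \mathrm{y}}_R\colon w(h)=v\}$ in place of $\widehat{H}^1(v)$: starting from $x=\{v\}$, repeatedly pass to a vertex $w$ such that $x\cup\{w\}$ is contained in more than $(rt^2)^{2(rt^2-|x\cup\{w\}|)}n^{rt-2}$ of these edges; when the process halts at a maximal $x$ with $v\in x$ and $|x|\le rt(t-1)+1$, the set $x$ is not red, so a maximal $\Delta$-system with kernel $x$ inside this family has fewer than $rt^2$ petals, whence every edge of the family containing $x$ meets a bounded set $B$, contradicting maximality unless the family has size at most $(rt^2)^{2rt^2}n^{rt-2}$ from the outset (the ``$+1$'' in the statement absorbing the at most $|V_R|n^{rt-2}$ edges $h$ for which already $|\{h'\colon \{v\}\subseteq h'\}|\le(rt^2)^{2(rt^2-1)}n^{rt-2}$, summed over $v$). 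Summing over $v\in V_R$ then gives $\big|\widehat{H}^{3, \mathrm{y}}_R\big|\le\bigl((rt^2)^{2rt^2}+1\bigr)|V_R|n^{rt-2}$.

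The main obstacle is the step showing $w(h)\in V_R$ — specifically ruling out $w(h)\in V_T$. The naive claim ``a team vertex in $h$ together with $f\subseteq h$ produces a red subset of $h$ through that vertex'' needs care: redness of a set means it is the kernel of a size-$rt^2$ sunflower in $H$, not merely that it is contained in an edge. I would handle this by using that $f\in H^*_\red$ is itself such a kernel, so there are many edges of $H$ of the form $f\cup f_i$ with the $f_i$ pairwise disjoint; intersecting these with the team $g$ and using the inseparability of teams (property~\ref{dl:dc4} of the green sets) one forces $g\subseteq f\cup\{$one extra vertex$\}$ or similar, and then a short case analysis on $|g\cap f|$ combined with $t\mid|g\cap f'|$ for red $f'$ pins down the structure; the cleanest route may instead be to note that if $w\in V_T\cap h$ then $w$ lies in a team $g\subseteq h$, apply~\ref{dl:dc4} to $g$ and $f\in H^*_\red$ to get $g\subseteq f$ or $g\cap f=\varnothing$, and in either case derive that $h$ \emph{is} a union of its red subsets (using that then $h\setminus g$, resp.\ a slightly enlarged red set, is again red by maximality and~\eqref{eq:rh}), contradicting $h\in\widehat{H}^{3, \mathrm{y}}_R$. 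Getting this dichotomy to close cleanly, rather than the averaging in the second paragraph, is where the delicacy lies.
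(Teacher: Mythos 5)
There are two genuine gaps here, and they sit exactly where you flagged the ``delicacy''. First, your designated vertex $w(h)$ (a vertex of $h$ lying in no subset of $h$ from $H^*_\red$) need \emph{not} lie in $V_R$, and your sketched ways of excluding $w(h)\in V_T$ do not close: knowing that the team $g\ni w(h)$ satisfies $g\subseteq h$ and $g\cap f=\varnothing$ (which is all that~\ref{dl:dc4} gives) in no way makes $h$ a union of its red subsets, and there is no maximality principle producing a member of $H^*_\red$ through a team vertex. The paper does not rule this case out either; instead it keeps both cases. It first fixes $f\in H^*_\red$, $f\subseteq h$, choosing $f=Y_u$ whenever $h\cap V_S\ne\varnothing$, and picks $v\in h\sm f$ in no $H^*_\red$-subset of $h$; by \eqref{eq:55} $v\notin V_S$. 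If $v\in V_R$ it is marked, but if $v\in V_T$ the argument marks a \emph{different} vertex: since then the team $g\ni v$ is disjoint from $f$, the exclusion $h\notin H_{ST}$ together with the initial choice of $f$ forces $h\cap V_S=\varnothing$, and the parity observation $|h\sm f|\equiv -1\pmod t$ shows $h\sm f$ is not covered by teams, so some vertex of $h\sm(f\cup g)$ lies in $V_R$ and can be marked. None of this machinery (the choice $f=Y_u$, the use of $h\notin H_{ST}$, the parity argument) appears in your proposal, and without it the case $w(h)\in V_T$ is simply unresolved.

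Second, your multiplicity bound is not salvaged by re-running the argument of Fact~\ref{fact:h1} on the family $\{h\colon w(h)=v\}$. That argument needs, at the halting set $x$, that $x$ is \emph{not red}; in Fact~\ref{fact:h1} this comes from the definition of $\widehat{H}^1(v)$, which excludes red subsets of $h$ through $v$ of \emph{every} size. Your vertex $w(h)$ is only excluded from subsets of $h$ belonging to $H^*_\red$, i.e.\ red sets of the maximal size $rt(t-1)+1$; the halting $x$ may perfectly well be a red set of size at most $rt(t-1)$ containing $v$, and then the maximal $\Delta$-system with kernel $x$ inside your family need not have fewer than $rt^2$ petals, so the bounded set $B$ and the contradiction with maximality evaporate. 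The paper gains the missing factor of $n$ quite differently: since $h\notin\widehat{H}^1_R$, the vertex $v$ lies in some set of $\widehat{H}_\red\cup G$ inside $h$, hence by~\ref{dl:dc3} in a \emph{green} set $g$ with $v\in g\subseteq h$; then $t\mid |g|,|g\cap f|$ gives $|h\sm(f\cup g)|\le rt-3$, and the count is $\Delta(G)\le (rt^2)^{2rt^2}$ choices for $g$ (property~\ref{dl:dc1}), at most $n$ choices for $f$ (by $|H^*_\red|\le n$), and $n^{rt-3}$ for the rest, i.e.\ $(rt^2)^{2rt^2}n^{rt-2}$ markings per vertex, with an extra $n^{rt-2}$ coming from the $v\in V_T$ case. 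Your write-up never brings the green structure into the counting, so as it stands the bound per vertex is off by a factor of $n$ and the proposed repair does not work.
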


\begin{proof}
	Consider any edge $h\in \widehat{H}^{3, \mathrm{y}}_R$. Since $h\in \widehat{H}^3_R$,
	there is a set $f\in H^*_\red$ with $f\subseteq h$. If $h\cap V_S\ne\varnothing$ 
	we may suppose by~\eqref{eq:55} that $f=Y_u$ holds for some $u\in h\cap V_S$.
	
	By $h\not\in \widehat{H}^{3, \mathrm{x}}_R$ there exists a vertex $v\in h\sm f$
	that is not contained in any member of $H^*_\red$ which at the same 
	time happens to be a subset of $h$. Therefore $h\not\in \widehat{H}^1_R$ tells us
	that there exists a set $\widehat{g}\in \widehat{H}_\red\cup G$ 
	with $v\in \widehat{g}\subseteq h$. Due to property~\ref{dl:dc3} of $G$ this leads us 
	to a green set~$g$ with $v\in g\subseteq h$.
	Because of~\ref{dl:dc4} the numbers $|g|$ and $|f\cap g|$ are divisible by $t$,
	and hence so is $|g\sm f|$. Thus it follows from $v\in g\sm f$ that $|g\sm f|\ge t$,
	wherefore
	\begin{equation}\label{eq:hfg}
		|h\sm (f\cup g)|\le rt-t-1\le rt-3\,. 
	\end{equation}
		
    By~\eqref{eq:55} and the choice of $v$ we have $v\not\in V_S$ and, hence, $v$ is 
    either in $V_R$ or in $V_T$. Let us analyse these two possibilities separately. 
    
	If $v\in V_R$, then we mark it. Property \ref{dl:dc1} of $G$ tells us that $v$ is 
	contained in at most $(rt^2)^{2rt^2}$ green sets and, using~\eqref{eq:hfg}, 
	one can conclude that in this way each vertex of $V_R$ is marked at 
	most $(rt^2)^{2rt^2}n^{rt-2}$ many times.
	
	On the other hand, if $v\in V_T$, then $g$ is a team. Since $|g\sm f|\ge t$, the sets 
	$f$ and $g$ are disjoint. By $h\not\in H_{ST}$ it follows that $f$ is not of the 
	form $Y_u$ with $u\in h\cap V_S$, and by our choice of $f$ this yields 
	$h\cap V_S=\varnothing$.
	Moreover $|h\sm f|\equiv -1\pmod{t}$ and, therefore, it is not possible that~$h\sm f$ is 
	entirely covered by teams. Consequently there is a vertex 
	$w\in \bigl(h\sm (f\cup g)\bigr)\cap V_R$ that can be marked. Now
	there are at most $n$ possibilities for $f$, for $g$, and for each of 
	the remaining vertices in $h\sm (g\cup f\cup \{w\})$. 
	Using~\eqref{eq:hfg} again, we get that in this way each vertex is marked 
	at most $n^{rt-2}$ further times. 
	
	Summarising the above estimations one obtains  
		\[
		\big|\widehat{H}^{3, \mathrm{y}}_R\big|
		\le  |V_R|(rt^2)^{2rt^2}n^{rt-2} +|V_R|n^{rt-2}\,.\qedhere
	\]
	\end{proof}

Collecting all the above results we get
\[
	\big|\widehat{H}^1_R\big|\le (rt^2)^{2rt^2}|V_R|n^{rt-2}
\]
from~\eqref{eq:H1} and Fact~\ref{fact:h1},
\[
	\big|\widehat{H}^2_R\big|\le (rt^2)^{r^3t^6-7rt^2}|V_R|n^{rt-2}
\]
from~\eqref{eq:H2} and Fact~\ref{fact:h2},
\[
	\big|\widehat{H}^3_R\big|\le 2(rt^2)^{2rt^2}|V_R|n^{rt-2}
\]
from~\eqref{eq:H3}, Fact~\ref{fact:h3}, Fact~\ref{fact:h4}, 
and finally 
\[
	|\widehat{H}_R|\le \bigl(3(rt^2)^{2rt^2}+(rt^2)^{r^3t^6-7rt^2}\bigr)|V_R|n^{rt-2}
	\le (rt^2)^{r^3t^6}|V_R|n^{rt-2}
\]
from~\eqref{eq:HR-123} and the three previous estimates. 
This concludes the proof of~\eqref{eq:HR-hut} and, hence, the proof of the 
Structure Theorem \ref{thm:dt}. 

\begin{bibdiv}
\begin{biblist}

\bib{AK97}{article}{
   author={Ahlswede, Rudolf},
   author={Khachatrian, Levon H.},
   title={The complete intersection theorem for systems of finite sets},
   journal={European J. Combin.},
   volume={18},
   date={1997},
   number={2},
   pages={125--136},
   issn={0195-6698},
   review={\MR{1429238}},
   doi={10.1006/eujc.1995.0092},
}

\bib{Be69}{article}{
   author={Berlekamp, E. R.},
   title={On subsets with intersections of even cardinality},
   journal={Canad. Math. Bull.},
   volume={12},
   date={1969},
   pages={471--474},
   issn={0008-4395},
   review={\MR{0249303}},
   doi={10.4153/CMB-1969-059-3},
}

\bib{BF80}{article}{
   author={Babai, L\'aszl\'o},
   author={Frankl, Peter},
   title={On set intersections},
   journal={J. Combin. Theory Ser. A},
   volume={28},
   date={1980},
   number={1},
   pages={103--105},
   issn={0097-3165},
   review={\MR{558879}},
   doi={10.1016/0097-3165(80)90063-1},
}

\bib{DEF76}{article}{
   author={Deza, M.},
   author={Erd\H os, P.},
   author={Frankl, P.},
   title={Intersection properties of systems of finite sets},
   journal={Proc. London Math. Soc. (3)},
   volume={36},
   date={1978},
   number={2},
   pages={369--384},
   issn={0024-6115},
   review={\MR{0476536}},
   doi={10.1112/plms/s3-36.2.369},
}
\bib{EKR61}{article}{
   author={Erd\H os, P.},
   author={Ko, Chao},
   author={Rado, R.},
   title={Intersection theorems for systems of finite sets},
   journal={Quart. J. Math. Oxford Ser. (2)},
   volume={12},
   date={1961},
   pages={313--320},
   issn={0033-5606},
   review={\MR{0140419}},
   doi={10.1093/qmath/12.1.313},
}

\bib{ER}{article}{
   author={Erd\H os, P.},
   author={Rado, R.},
   title={Intersection theorems for systems of sets},
   journal={J. London Math. Soc.},
   volume={35},
   date={1960},
   pages={85--90},
   issn={0024-6107},
   review={\MR{0111692}},
   doi={10.1112/jlms/s1-35.1.85},
}

\bib{FF85}{article}{
   author={Frankl, Peter},
   author={F\"uredi, Zolt\'an},
   title={Forbidding just one intersection},
   journal={J. Combin. Theory Ser. A},
   volume={39},
   date={1985},
   number={2},
   pages={160--176},
   issn={0097-3165},
   review={\MR{793269}},
   doi={10.1016/0097-3165(85)90035-4},
}

\bib{FR87}{article}{
   author={Frankl, Peter},
   author={R\"odl, Vojt\v ech},
   title={Forbidden intersections},
   journal={Trans. Amer. Math. Soc.},
   volume={300},
   date={1987},
   number={1},
   pages={259--286},
   issn={0002-9947},
   review={\MR{871675}},
   doi={10.2307/2000598},
}

\bib{Ko97}{article}{
   author={Kostochka, A. V.},
   title={A bound of the cardinality of families not containing
   $\Delta$-systems},
   conference={
      title={The mathematics of Paul Erd\H os, II},
   },
   book={
      series={Algorithms Combin.},
      volume={14},
      publisher={Springer, Berlin},
   },
   date={1997},
   pages={229--235},
   review={\MR{1425216}},
   doi={10.1007/978-3-642-60406-5\_19},
}

\bib{LP}{article}{
	author={\L uczak, Tomasz}, 
	author={Polcyn, Joanna},
	title={Paths in hypergraphs: a rescaling phenomenon},
	eprint={1706.08465},
	note={Submitted},
}

\bib{MR14}{article}{
   author={Mubayi, Dhruv},
   author={R\"odl, Vojtech},
   title={Specified intersections},
   journal={Trans. Amer. Math. Soc.},
   volume={366},
   date={2014},
   number={1},
   pages={491--504},
   issn={0002-9947},
   review={\MR{3118403}},
   doi={10.1090/S0002-9947-2013-05877-1},
}

\bib{W84}{article}{
   author={Wilson, Richard M.},
   title={The exact bound in the Erd\H os-Ko-Rado theorem},
   journal={Combinatorica},
   volume={4},
   date={1984},
   number={2-3},
   pages={247--257},
   issn={0209-9683},
   review={\MR{771733}},
   doi={10.1007/BF02579226},
}

\end{biblist}
\end{bibdiv}
\end{document}